\numberwithin{theorem}{section}
\numberwithin{equation}{section}
\def\T{\mathcal{T}}
\def\a{\rm a}
\def\p0{\pmb{0}}
\def\R{\mathbb{R}}
\def\<{\langle}
\def\a{{\rm a}}
\def\bqce{{\rm bqce}}
\def\bqhoce{{\rm bqhoce}}
\def\err{{\mathrm{err}}}
\def\hoc{{\rm hoc}}
\definecolor{yscol}{HTML}{6622AA}
\definecolor{jlcol}{rgb}{0.2, 0.6, 0.2}
\definecolor{hwcol}{rgb}{0.0, 0, 1.0}
\def\T{\mathcal{T}}
\def\R{\mathbb{R}}
\definecolor{yscol}{HTML}{6622AA}
\title[Blended Atomistic to Higher-Order Continuum Coupling]{Formulation and Analysis of Blended Atomistic to Higher-Order Continuum Coupling Methods for Crystalline Defects}
\author{Junfeng Lu}
\address{Junfeng Lu\\
	School of Mathematics\\
	Sichuan University\\
	No.24 Yihuan Road\\
	Chengdu 610065, 
	China
}
\email{lujunfeng1@stu.scu.edu.cn}
\author{Hao Wang}
\address{Hao Wang\\
	School of Mathematics\\
	Sichuan University\\
	No.24 Yihuan Road\\
	Chengdu 610065, 
	China
}
\email{wangh@scu.edu.cn}
\author{Yangshuai Wang}
\address{Yangshuai Wang\\
	Department of Mathematics\\
	 National University of Singapore\\
	  10 Lower Kent Ridge Road\\
	   119076, Singapore
}
\email{yswang@nus.edu.sg}
\date{\today}
\begin{document}

\maketitle

\begin{abstract}
	Concurrent multiscale methods play an important role in modeling and simulating materials with defects, aiming to achieve the balance between accuracy and efficiency. Atomistic-to-continuum (a/c) coupling methods, a typical class of concurrent multiscale methods, link atomic-scale simulations with continuum mechanics. Existing a/c methods adopt the classic {\it second-order} Cauchy-Born approximation as the continuum mechanics model. In this work, we employ a {\it higher-order} Cauchy-Born model to study the potential accuracy improvement of the coupling scheme. In particular, we develop an energy-based blended atomistic to higher-order continuum method and present a rigorous {\it a priori} error analysis. We show that the overall accuracy of the energy-based blended method is not actually improved due the coupling interface error which is of lower order and may not be improved.  On the contrast, higher order accuracy is achieved by the force-based blended atomistic to higher-order continuum method. Our theoretical results are demonstrated by a detailed numerical study. 
\end{abstract}

\section{Introduction}
\label{sec:intro}

Crystalline defects play a crucial role in determining the mechanical and functional properties of materials. Understanding their behavior is essential for unraveling material mechanisms and optimizing performance. Fully molecular mechanics simulations can capture these effects with high accuracy but are computationally prohibitive for large-scale systems~\cite{olson2023elastic, braun2022higher, fu2023adaptive}. To address this, atomistic-to-continuum (a/c) coupling methods (also known as the quasicontinuum methods particular in the engineering community) integrate the molecular mechanics or atomistic models around defect cores (also termed as the atomistic region) with continuum models in the elastic far-field (also termed as the continuum region). Such methods significantly reduce the computational cost but maintain the accuracy~\cite{1999_VS_RM_ETadmor_MOrtiz_AFEM_QC_JMPS,2002_PL_QCL_1D_MATHCOMP,2002_XB_LB_PL_From_MM_to_CM_ARMA,2009_Miller_Tadmor_Unified_Framework_Benchmark_MSMSE,2018_DO_XL_CO_BK_Force_Based_AC_Complex_Lattices_NUMMATH, fu2025meshac, wang2025posteriori, chen2022qm, wang2021posteriori, chen2019adaptive, wang2023adaptive}. We refer to~\cite{2013_ML_CO_AC_Coupling_ACTANUM} for a comprehensive overview and benchmark of a/c methods.

Based on the variational principles, a/c coupling methods can be broadly classified into energy-based and force-based approaches. Energy-based methods construct a hybrid energy functional by combining atomistic and continuum energy formulations, where the solution corresponds to the local minimum of this total hybrid energy~\cite{2002_PL_QCL_1D_MATHCOMP, 2007_PL_QCL_2D_SIAMNUM, 2008_CO_ES_QCM_1D_M2NA}. In contrast, force-based methods directly couple the atomistic and continuum forces to formulate a non-conservative hybrid force field and solve the problem through a system of force balance equations~\cite{PM_Error_QCF_2008,2008_MD_ML_Ana_Force_Based_QC_M2NA,2011_CM_CO_ES_Ana_QCF_SAC_1D_NUMMATH}. Each category can be further divided into sharp-interfaced and blended types, depending on how the transition between the atomistic and continuum regions is handled. Sharp-interfaced methods employ a reconstructed site energy at the atomistic/continuum interface, typically spanning only a few layers of atoms. Blended methods, on the other hand, introduce a longer transition region where the atomistic and continuum descriptions are gradually mixed. Although sharp-interfaced methods, such as the geometric reconstruction-based atomistic/continuum (GRAC) method~\cite{2012_CO_LZ_GRAC_Construction_SIAMNUM}, possess the advantage of faster transition of scales, the practical applications of such methods are restricted due to the complexity of the reconstruction of the interface energy. Blended methods, in contrast, are widely adopted for their simplicity of formulation. 

Blended methods have been widely used in multiscale simulations for quite some time~\cite{2004_BD_GR_IJNME,2004_SX_TB_Bridging_Domain_CMAME,2005_ST_TH_WKL_Bridging_Sacle_IJNME,2006_WL_HP_DQ_EK_HK_GW_Bridge_Scale_CMAME,2008_PB_BD_NE_TO_SP_Arlequin_AC_CM,2008_SP_BD_PB_TO_Anal_Coupling_Error_Arlequin_CMAME,2010_LC_SP_BD_TO_Ghost_Force_Arlequin_IJNME}, with different researchers adopting various terms to describe the same concept, such as "handshake region," "bridging region," or "overlap region." All of these terms are essentially referring to the concept of the blending region in atomistic-to-continuum coupling methods. For instance, in the Arlequin method~\cite{2004_BD_GR_IJNME,2008_PB_BD_NE_TO_SP_Arlequin_AC_CM,2008_SP_BD_PB_TO_Anal_Coupling_Error_Arlequin_CMAME,2010_LC_SP_BD_TO_Ghost_Force_Arlequin_IJNME}, atomistic and continuum models coexist within the overlap region, with their respective contributions being proportionally weighted by a blending function, and a Lagrange multiplier is then applied to ensure the displacement consistency between the two models. In contrast to the Arlequin method, the energy-based atomistic-to-continuum coupling method directly mixes the energies of the atomistic and continuum models~\cite{2011_BK_ML_BQCE_1D_SIMNUM,2013_ML_CO_BK_BQCE_CMAME,2016_XL_CO_AS_BK_BQC_Anal_2D_NUMMATH}, forming a unified model that minimizes displacement energy. This approach allows for the reduction of ghost forces at the interface by adjusting the length of the blending region. As for the force-based atomistic-to-continuum coupling method, due to the absence of $W^{1,2}$ stability~\cite{2010_MD_ML_CO_QCF_Stability_ARMA,2012_MD_CO_AS_QCF_Spectrum_MMS}, constructing a sufficiently long blending region ensures that the forces from the atomistic and continuum models are smoothly coupled, which allows the model to exhibit $W^{1,2}$ stability~\cite{2016_XL_CO_AS_BK_BQC_Anal_2D_NUMMATH,2012_XL_ML_CO_BQCF_Stab_1D_2D_MMS,2014_HL_ML_CO_AS_BQCF_Compt_CMAME,2012_JL_PM_Convergence_BQCF_3D_No_Defects_CPAM}.

We focus on developing and analyzing blended a/c coupling methods in the current work. The blended a/c methods have been proposed and analysed in a series of works~\cite{2011_BK_ML_BQCE_1D_SIMNUM,2013_ML_CO_BK_BQCE_CMAME,2016_XL_CO_AS_BK_BQC_Anal_2D_NUMMATH, 2012_XL_ML_CO_BQCF_Stab_1D_2D_MMS,2012_JL_PM_Convergence_BQCF_3D_No_Defects_CPAM,2014_HL_ML_CO_AS_BQCF_Compt_CMAME}. However, these methods are employ the classical second-order Cauchy-Born approximation ~\cite{2007_WE_PM_Cauchy_Born_Rule_and_The_Stability_ARMA, 2013_CO_FT_Cauchy_Born_ARMA} as the continuum model. This inherently limits the overall accuracy of the coupling scheme. Recent studies have introduced a few higher-order Cauchy-Born approximations ~\cite{2021_YW_LZ_HW_HOC_IMANUM, 2005_MA_MG_Higher_Order_Continuum_Model_MMS}. In particular, we prove in ~\cite{2021_YW_LZ_HW_HOC_IMANUM} that a particular constructed Cauchy-Born approximation can achieve fourth-order accuracy relative to the full atomistic model. This motivates the current study of the development and analysis of coupling strategies that integrate the atomistic model with a higher-order continuum model for the simulation of crystalline defects. By incorporating a higher-order continuum model, we intend to improve the overall accuracy of the method, particularly the consistency error of the coupling interface which is often the key limitation of the coupling scheme. 

In this work, we formulate an energy-based blended atomistic to higher-order continuum coupling method. We provide a rigorous {\it a priori} error analysis for such method and show that while the higher-order continuum model significantly enhances the accuracy within the continuum region, it does not reduce the coupling error at the coupling interface. A insightful mathematical explanation for this phenomenon is given by our analysis, revealing the fundamental limitation of the energy-based blending approach in this context. We also propose a force-based blended atomistic to higher-order continuum coupling method which avoid the essential obstacle of the energy-based scheme to improve the order of accuracy. We provide a systematic comparison and discussion of the convergence behavior for different blended methods and validate all aspects of our theoretical findings by comprehensive numerical experiments, demonstrating the (in-)effectiveness of the proposed methods.

We restrict ourselves to a one dimensional setting since the primary objective of this study is to clearly establish a formulation for blended atomistic to higher-order continuum coupling method and investigate the possibility of obtaining an improved accuracy compared with the classical a/c methods. However, we note that the error analysis presented here is general and is expected to extend to blended a/c coupling methods in higher dimensions. This study serves as a proof of concept, laying the groundwork for the extensions, which are further discussed in the conclusion and outlook.

\subsection{Outline}
\label{sec:sub: Outline}

The paper is structured as follows. In Section~\ref{sec: A-C Models}, we introduce the atomistic model along with its Cauchy-Born approximation. Section~\ref{sec: B-QHOCF and B-QHOCE} presents the formulation of both the energy-based and the force-based blended atomistic to continuum coupling method between the atomistic and the higher-order continuum model. In Section~\ref{sec: consistency}, we provide a rigorous {\it a priori} analysis for the energy-based blended coupling method together with a discussion for different blended a/c methods. We validate our theoretical findings through numerical experiments in Section~\ref{sec: Numerical experiments}. In Section~\ref{sec:conclusion}, we present the conclusions of this study and provide a detailed discussion on potential future research directions based on our findings.


\subsection{Notations}
\label{sec:sub: Natations}
We use the symbol $\langle\cdot,\cdot\rangle$ to denote an abstract duality
pairing between a Banach space and its dual. The symbol $|\cdot|$ normally
denotes the Euclidean or Frobenius norm, while $\|\cdot\|$ denotes an operator
norm. Directional derivatives are denoted by $\nabla \rho f := \nabla f\cdot\rho,\, \rho \in \mathbb{R}$. Letting the symbol $\langle \cdot,\cdot\rangle $
denote the duality pairing, the first and second variations of $\mathscr{F}$ at $u$ are defined as 
\begin{align}
	\langle\delta\mathscr{F}(u),v\rangle&:=\lim_{t\to 0}t^{-1}(\mathscr{F}(u+tv)-\mathscr{F}(u))\nonumber\\
	\text{and}\quad \langle\delta^2\mathscr{F}(u),v\rangle&:=\lim_{t\to 0}t^{-1}(\delta\mathscr{F}(u+tv)-\delta\mathscr{F}(u)).\nonumber
\end{align}

We write $|A| \lesssim B$ if there exists a constant $C$ such that $|A|\leq CB$, where $C$ may change from one line of an estimate to the next. When estimating rates of decay or convergence, $C$ will always remain independent of the system size, the configuration of the lattice and the the test functions. The dependence of $C$ will be normally clear from the context or stated explicitly.

\section{Preliminaries: The Atomistic Model and its Higher-order Cauchy-Born Approximations}
\label{sec: A-C Models}

In this section, we introduce the atomistic-to-continuum (a/c) coupling methods in one dimension. Extending these methods to higher dimensions is not straightforward, particularly due to the lack of rigorous analysis for higher-order continuum models in higher dimensions. Therefore, we focus on the one-dimensional case in this work. Section~\ref{sec:sub:atom} presents the atomistic model, which serves as the reference model. Section~\ref{sec:sub:hoc} provides an overview of the higher-order continuum models studied in~\cite{2021_YW_LZ_HW_HOC_IMANUM}.

\subsection{The atomistic model}
\label{sec:sub:atom}

We begin by introducing the atomistic or molecular mechanics model. Consider a periodic domain extracted from the infinite lattice $\mathbb{Z}$, consisting of $2N$ atoms or lattice points, denoted as $\Lambda :=\{-N+1, \ldots, N \}$. The computational domain is defined as $\Omega = [-N, N]$. Let $\T := \{T_j\}_{j=-N+1}^{N}$, where $T_j = [j-1, j]$, be the {\it canonical} finite element partition corresponding to $\Lambda$.

Let $ y_F:=F\mathbb{Z}$ be a uniformly deformed configuration under a macroscopic strain $F>0$. The space of $2N$-periodic displacements with central Dirichlet condition is then  defined as 
$$\mathscr{U}:=\Big\{ u: u({\ell+2N})=u(\ell) \;\text{for}\; \ell\in\Lambda, \,\text{and} \,u(0)=0\Big\},$$ 
and the set of {\it admissible} deformation is given by
$$\mathscr{Y}:=\left\{ y: y=y_F+u \;\text{for}\; u \in \mathscr{U}\right\}.$$

We consider an atomistic system with a finite interaction range $\mathscr{R}\subset\left\{1,2,...,r_{\rm cut}\right\}$ where $r_{\rm cut}>0$ is a cut-off radius. For a lattice function $v\in\mathscr{U}$, we define the finite difference operator 
$$
D_{\rho}v(\xi):=v(\xi+\rho)-v(\xi) \quad  \text{ for } \xi\in\mathbb{Z} \text{ and }\rho\in \mathscr{R}.		
$$

For an admissible deformation $y\in\mathscr{Y}$, the atomistic energy per period is given by
$$
\mathcal{E}^{\rm a}(y) := \sum_{\xi\in \Lambda}\sum_{\rho\in\mathscr{R}}\phi\big(D_{\rho}y(\xi)\big)=\sum_{\xi\in\Lambda}\sum_{\rho \in \mathscr{R}}\phi_{\rho}\big(D_{\rho}u(\xi)\big)=: \mathcal{E}^{\rm a}(u),
$$
where $\phi \in C^3((0,+\infty); \mathbb{R})$ is a pair potential, such as the Lennard-Jones interaction potential, and $\phi_{\rho}(r)$ is defined as $\phi_{\rho}(r) := \phi(r + F\rho)$. For analytical purposes, we assume the interatomic potential is sufficiently smooth. Specifically, we impose bounds on the derivatives of the interaction potential $\phi_{\rho}$: for $\rho\in\mathscr{R},1\leq j\leq 6$, we require

$$
M^{(j,s)}:=\sum_{\rho\in\mathscr{R}}\rho^{j+s}\sup_{g\in\mathbb{R}}\big|\phi_{\rho}^{(j)}(g)\big|<\infty.
$$

We equip the space of lattice functions $\mathscr{U}$ with the $H^1$-seminorm so that $$\mathscr{U}^{1,2} :=\left\{ u\in\mathscr{U}~\big|~\Vert\nabla u\Vert_{L^2(\Omega)}<+\infty\right\}.$$
Given a periodic dead load $f \in \mathscr{U}^{1,2}$, we define the work by the external force  
\begin{align}
	\langle f,u\rangle_\Lambda:=\sum_{\xi\in\Lambda}f(\xi)u(\xi).\label{Atomistic external force}
\end{align}
The atomistic problem we aim to solve is to find the local minimizers (denoted by $ \arg \min$) of the total energy
\begin{equation}
	\label{Atomistic variational problem}
	u^{\rm a}\in \arg \min\left\{\mathcal{E}^{\rm a}(u)-\langle f,u\rangle_\Lambda\ \big| \ u\in\mathscr{U}^{1,2}\right\}.
\end{equation}

We assume that the local minimizer \( u^{\rm a} \) satisfies the second-order optimality condition, given by  
\begin{equation}\label{atomistic model stability}
	\langle \delta^{2}\mathcal{E}^{\rm a}(u^{\rm a})v,v\rangle \ge c_{0} \Vert \nabla v \Vert^{2}_{L^{2}},  \quad \forall v\in \mathscr{U}^{1,2},
\end{equation}
for some constant \( c_{0} > 0 \). A solution \( u^{\rm a} \) satisfying this condition is referred to as a strongly stable local minimizer.

\subsection{The higher-order continuum model}
\label{sec:sub:hoc}

In this section, we introduce the higher order Cauchy-Born (or higher order continuum, HOC) approximation of the atomistic model \cite{2021_YW_LZ_HW_HOC_IMANUM, 2005_MA_MG_Higher_Order_Continuum_Model_MMS} and note that the classic Cauchy-Born (CB) approximation can be considered as a special case of the HOC. The idea of the higher-order Cauchy-Born approximations is that if the displacement $u$ (or its smooth interpolation) varies slowly then 
$$
D_{\rho}u(\xi)\approx\rho\nabla u(\xi')+\frac{\rho^3}{24}\nabla^3u(\xi'), \quad \text{where} \;\xi'=\frac{\xi+\xi+\rho}{2}.
$$
By substituting expansion into the atomistic energy and applying the step similar to Riemann sum \cite[Equation (3.9)]{2021_YW_LZ_HW_HOC_IMANUM}, the HOC model is defined by
\begin{align}\label{hoc energy}
	\mathcal{E}^{\rm hoc}(u)&=\int_{\Omega}\sum_{\rho\in\mathscr{R}}\phi_{\rho}\left(\rho\nabla u+\frac{\rho^3}{24}\nabla^3 u\right) \,{\rm d}x
	\nonumber \\ 
	&=: \int_{\Omega}W_{\rm hoc}(\nabla u,\nabla^3 u)\,{\rm d}x,
\end{align}
where $W_{\rm hoc}$ denotes the energy density of higher-order continuum models. 
It is worthwhile noting that the CB model can be obtained by including only the first-order term in \eqref{hoc energy} so that
\begin{eqnarray}\label{eq:cb}
	\mathcal{E}^{\rm cb}(u):=\int_{\Omega}W_{\rm cb}(\nabla u)\,{\rm d}x, \quad \text{where}~~ W_{\rm cb}(\nabla u) = \sum_{\rho\in\mathscr{R}}\phi_{\rho}(\rho\nabla u).
\end{eqnarray}

Since the Euler-Lagrange equation of the HOC model is a sixth-order elliptic equation~\cite{2021_YW_LZ_HW_HOC_IMANUM}, the displacement space $u$ in $\mathcal{E}^{\rm hoc}(\cdot)$ and $W_{\rm hoc}(\nabla \cdot,\nabla^3 \cdot)$ should belong to an appropriate higher-order Sobolev space to ensure well-posedness and regularity, i.e.,
\begin{align}
	\label{equ: space of BQHOCE without coarse graining}
	\mathcal{U}^{\rm hoc}:=\big\{ u\in C^2(\Omega;\mathbb{R})~\big|&~u|_{T_i}\in \mathbb{P}_5(T_i),\,\nabla^j u(x+2N)=\nabla^ju(x),\,\nonumber\\
	&u(0)=0,i=-N+1,...,N,\,j=0,1,2.\big\}.
\end{align}

For future reference, we can always associate a function $u\in\mathscr{U}^{1,2}$ with its smooth interpolation $\Pi_{\rm hoc} u \in \mathcal{U}^{\rm hoc}$ by an interpolation operator $\Pi_{\rm hoc}: \mathscr{U}^{1,2} \rightarrow \mathcal{U}^{\rm hoc}$~(cf.~\cite[Equation (2.3)]{2021_YW_LZ_HW_HOC_IMANUM}).

For a given dead load $f$, we define the inner product as
$$
(f,u)_\Omega := \int_{\Omega} f \cdot u \, {\rm d} x.
$$
The HOC problem is then formulated as the following local minimization problem, where the objective is to find a displacement field that minimizes the higher-order continuum energy functional:
\begin{align}
	\label{minimization of HOC}
	u^{\rm hoc} \in \arg \min \left\{ \mathcal{E}^{\rm hoc}(u) - (f,u)_\Omega \ \big|\ u \in \mathcal{U}^{\rm hoc} \right\}.
\end{align}

As shown in \cite[Theorem 6.3]{2021_YW_LZ_HW_HOC_IMANUM}, compared with the CB model, the HOC model achieves fourth-order accuracy with respect to the interatomic spacing $\varepsilon$. To be precise, we present the following error estimates for the HOC model. 
\begin{equation}
	\label{eq: accuracy_HOC}
	\Vert \nabla \Pi_{\rm hoc} u^{\rm a}-\nabla u^{\rm hoc}\Vert_{L^2(\Omega)} \leq C \err^{\hoc}_{\Omega}(\Pi_{\rm hoc} u^{\rm a}),
\end{equation}
where
$$
\mathrm{err}^{\rm hoc}_{\Omega}(\Pi_{\rm hoc} u^{\rm a}) := \Vert\nabla^5\Pi_{\rm hoc} u^{\rm a}\Vert_{L^2(\Omega)} + \Vert\nabla^2\Pi_{\rm hoc} u^{\rm a}\nabla^4\Pi_{\rm hoc} u^{\rm a}\Vert_{L^2(\Omega)} + \Vert\nabla^3\Pi_{\rm hoc} u^{\rm a}(\nabla^2\Pi_{\rm hoc} u^{\rm a})^2\Vert_{L^2(\Omega)}
$$
$$
+ \Vert\nabla^3\Pi_{\rm hoc} u^{\rm a}\Vert^2_{L^4(\Omega)}\Vert\nabla^2\Pi_{\rm hoc} u^{\rm a}\Vert^4_{L^8(\Omega)} + \Vert\nabla^3\Pi_{\rm hoc} f\Vert_{L^2(\Omega)},
$$
and $C$ is a generic constant depending on $M^{(j,4)},\,j=1,\ldots,5$.

It is also shown in~\cite[Equation (6.9)]{2021_YW_LZ_HW_HOC_IMANUM} that if we scale the system with the interatomic spacing $\varepsilon:= \frac{1}{2N}$ such that 
$$
X:=\varepsilon x,\, U:=\varepsilon u ~~ \text{and}~~  F := \varepsilon^{-1}f,$$ and consequently  
\begin{align}
	\Vert\nabla^i f\Vert_{L^2(\Omega)} &= \varepsilon^{i+1-\frac{1}{2}} \Vert\nabla^i F\Vert_{L^2}, \quad~ i = 0, 1, 2, 3, \nonumber \\
	\Vert\nabla^j u\Vert_{L^2(\Omega)} &= \varepsilon^{j-1-\frac{1}{2}} \Vert\nabla^j U\Vert_{L^2}, \quad j = 1, 2, 3, 4, 5.\nonumber
\end{align}
With a proper modification of the interpolation operator $\Pi_{\rm hoc}$, we are able to obtain the following result:
\[
\Vert\nabla \Pi_{\rm hoc} U^{\rm a}-\nabla U^{\rm hoc}\Vert_{L^2} \leq C \varepsilon^4,
\]
which yields the resulting higher-order continuum model is fourth-order accuracy to the reference atomistic model with respect to lattice spacing.


\section{Blended Coupling of Atomistic and High-order Continuum Models}
\label{sec: B-QHOCF and B-QHOCE}

We note that the estimate in \eqref{eq: accuracy_HOC} only holds when the external force $f$ is sufficiently smooth. This corresponds to the situation that the system is not defected. If $f$ possesses certain singularity, which represents defects in one dimension, we need to couple the atomistic model with the continuum model together to provide the resolution. Such scheme is often termed as the atomistic-to-continuum (a/c) coupling methods \cite{2013_ML_CO_AC_Coupling_ACTANUM,2012_CO_LZ_GRAC_Construction_SIAMNUM,2011_BK_ML_BQCE_1D_SIMNUM,2016_XL_CO_AS_BK_BQC_Anal_2D_NUMMATH,2016_EV_CO_AS_Boundary_Conditions_for_Crystal_Lattice_ARMA}. In this work, we concentrate on the so called blended a/c methods \cite{2011_BK_ML_BQCE_1D_SIMNUM,2016_XL_CO_AS_BK_BQC_Anal_2D_NUMMATH,2012_XL_ML_CO_BQCF_Stab_1D_2D_MMS} which are formulated in detail in the rest of this section. 

\subsection{Formulation of atomistic-to-high-order-continuum coupling}
\label{sec:sub: Formula}
We first decompose the computational domain $\Omega$ into the atomistic, the blending and the continuum regions which are denoted by $\Omega_{\rm a}, \Omega_{\rm b}$ and $\Omega_{\rm c}$ respectively. The lattice $\Lambda$ is correspondingly decomposed as 
$$
\Lambda_{\rm a}:= \Lambda\cap\Omega_{\rm a}, \quad \Lambda_{\rm b}:= \Lambda\cap\Omega_{\rm b} \quad \text{and} \quad  \Lambda_{\rm c}:= \Lambda\cap\Omega_{\rm c}
.$$  We then define the blending function $\beta\in C^{2,1}: \Omega\to[0,1]$ such that $\beta = 0$ in $\Omega_{\rm a}$, $\beta = 1$ in $\Omega_{\rm c}$ and ${\rm supp}(\nabla\beta) = \Omega_{\rm b}$. We often assume that the singularity of $f$ is contained in $\Omega_{\rm a}$. 

We define the continuous piecewise affine interpolation $Qv \in \mathbb{P}_1(\T)$ for any $v: \Omega \rightarrow \R$. Given an external load $f$ with possible singularity in $\Omega_a$, the classic energy-based blended quasicontinuum (B-QCE) method is to find
\begin{align}
	\label{eq: BQCE}
	u^{\rm bqce} &\in\arg \min\left\{\mathcal{E}^{\rm bqce}(u)-\langle f,u\rangle_\Lambda~\big|~u\in \mathscr{U}^{1,2} \right\}, 
\end{align}
where $$\mathcal{E}^{\rm bqce}(u):= \sum_{\xi\in\Lambda_{\rm a}\cup\Lambda_{\rm b}}\big(1-\beta(\xi)\big)
\sum_{\rho\in\mathscr{R}}\phi_{\rho}\big(D_{\rho}u(\xi)\big)+\int_{\Omega_b\cup\Omega_c}Q\beta W_{\rm cb}(\nabla u)\,{\rm d}x,$$
with the Cauchy-Born energy density defined by \eqref{eq:cb}.

Instead of searching for the local energy minimizers, the classic force-based quasicontinuum (B-QCF) method looks for equilibriums by solving the force-balance equations in the following weak form
\begin{equation}
	\label{eq: BQCF}
	\langle\mathcal{F}^{\rm bqcf}(u),v\rangle=\langle f, v\rangle_\Lambda,\;\forall v\in\mathscr{U}^{1,2}.
\end{equation}
where 
\begin{equation*}
	\langle\mathcal{F}^{\rm bqcf}(u),v\rangle:=\langle\delta\mathcal{E}^{\rm a}(u),(1-\beta)v\rangle+\langle\delta\mathcal{E}^{\rm cb}(u),Q(\beta v)\rangle.
\end{equation*}

The B-QCE and the B-QCF methods are formulated in \cite{2011_BK_ML_BQCE_1D_SIMNUM, 2013_ML_CO_BK_BQCE_CMAME, 2012_XL_ML_CO_BQCF_Stab_1D_2D_MMS, 2012_JL_PM_Convergence_BQCF_3D_No_Defects_CPAM} and analyzed in detail in \cite{2013_ML_CO_AC_Coupling_ACTANUM, 2016_XL_CO_AS_BK_BQC_Anal_2D_NUMMATH,2012_JL_PM_Convergence_BQCF_3D_No_Defects_CPAM,2014_HL_ML_CO_AS_BQCF_Compt_CMAME}. By introducing higher-order continuum model in $\Omega_{\rm b}\cup\Omega_{\rm c}$, we formulate the energy-based blended higher order quasicontinuum (B-QHOCE) method and the force-based blended higher order quasicontinuum (B-QHOCF) method in one dimension.

To make the formulation rigorous, we first introduce the space of displacements for the B-QHOC methods. Since the third-order derivative of the displacement $u$ is included in the higher continuum model, we employ piecewise quintic polynomials in the blending and the continuum regions. The space of the displacement of the BQHOC methods is then formally defined by
\begin{align}
	\mathcal{U}^{\rm bqhoc}:=&\bigg\{u\in C(\Omega;\mathbb{R})\cap C^2(\Omega_{\rm b}\cup\Omega_{\rm c};\mathbb{R})~\bigg|~u|_{T_i}\in \mathbb{P}_1(T_i),\text{for}\; T_i\subset\Omega_{\rm a}, \;u|_{T_i}\in \mathbb{P}_5(T_i),\nonumber\\
	&\text{for}\,T_i\subset\Omega_{\rm b}\cup\Omega_{\rm c},u(0)=0,\;\nabla^j u(x)=\nabla^j u(x+2N),\;i=-N+1,..,N,\;j=0,1,2.\bigg\}.\nonumber
\end{align}

The B-QHOCE method aims to find
\begin{align}
	&\quad u^{\rm bqhoce}\in \arg \min\left\{\mathcal{E}^{\rm bqhoce}(u)-(\langle f,u\rangle)\ \big| \ u\in\mathcal{U}^{\rm bqhoc}\right\},\label{eq:BQHOCE}
\end{align}
where the internal energy is given by
\begin{equation}
	\mathcal{E}^{\rm bqhoce}(u) := \sum\nolimits_{\xi\in\Lambda_{\rm a}\cup\Lambda_{\rm b}}\big(1-\beta(\xi)\big)
	\sum\nolimits_{\rho\in\mathscr{R}}\phi_{\rho}\big(D_{\rho}u(\xi)\big)+\int_{\Omega_{\rm b}\cup\Omega_{\rm c}}\beta W_{\rm hoc}(\nabla u,\nabla^3 u)\,{\rm d}x,\nonumber
\end{equation}
with the energy density of the higher-order continuum model \( W_{\rm hoc} \) defined in \eqref{hoc energy}. The external energy is defined as
\begin{align}
	(\langle f,u\rangle) := \langle f,u\rangle_{\Lambda_{\rm a}}+(f, u)_{\Omega_{\rm b} \cup \Omega_{\rm c}}.
	\label{BQHOCE_external_force}
\end{align}

The B-QHOCF method seeks to solve the force-balance equation in the following weak form:
\begin{align}
	\langle \mathcal{F}^{\rm bqhocf}(u), v \rangle = \langle f, (1-\beta)u \rangle_{\Lambda_{\rm a} \cup \Lambda_{\rm b}} + (f, \beta u)_{\Omega_{\rm b} \cup \Omega_{\rm c}}, \quad \forall v \in \mathcal{U}^{\rm bqhoc}, \label{eq:B-QHOCF}
\end{align}
where
\begin{equation*}
	\langle \mathcal{F}^{\rm bqhocf}(u), v \rangle := \langle \delta \mathcal{E}^a(u), (1-\beta)v \rangle + \langle \delta \mathcal{E}^{\rm hoc}(u), \beta u \rangle.
\end{equation*}

In the following section, we present the main results of this work, specifically the {\it a priori} error estimate for the B-QHOCE method defined in~\eqref{eq:BQHOCE}, along with its rigorous proof. Additionally, we formally state the results for the B-QHOCF method defined in~\eqref{eq:B-QHOCF}, while deferring the detailed rigorous proof to a forthcoming work.


\subsection{Main results}
\label{sec:sub:results}
We first derive the error estimates for the B-QHOC method. For completeness, we also state the corresponding results for the B-QCE and B-QCF methods. While these results have been established in previous analytical work~\cite{2016_XL_CO_AS_BK_BQC_Anal_2D_NUMMATH}, we adapt them to fit the framework of our current setting.

The key observation here is that: under certain assumptions on the stability of the methods, the B-QHOCF solutions have substantial improved accuracy compared to the B-QCE and the B-QCF solutions, whereas the BQHOCE solutions have the same level of accuracy as the B-QCE solutions. To be more clear, we observe that the B-QHOCF method produces solutions with the same level of accuracy as those of the HOC method, but for defected systems rather than the nondefected ones. On the other hand, the B-QHOCE method has little improvement on the accuracy even though the HOC model is utilized in the blending and continuum regions. 

To present the error estimates,  we define the interpolation operator $\Pi:\mathscr{U}^{1,2}\to \mathcal{U}^{\rm bqhoc}$ by
\begin{align}
	\label{equ: Pi}
	\Pi u := \left\{
	\begin{aligned}
		\Pi_{\rm hoc} u, \quad &\text{for } x \in \Omega_{\rm b} \cup \Omega_{\rm c}, \\
		Q u, \quad &\text{for } x \in \Omega_{\rm a}.
	\end{aligned}
	\right.
\end{align}
such that $\Pi u$ is piecewise affine in $\Omega_{\rm a}$ and piecewise quintic in $\Omega_{\rm b}\cup\Omega_{\rm c}$. 

It is shown in~\cite{2011_BK_ML_BQCE_1D_SIMNUM,2012_XL_ML_CO_BQCF_Stab_1D_2D_MMS} that the following stability conditions are rigorously established:
\begin{align}
	\inf_{v\in\mathscr{U}^{1,2}}\langle\delta^2\mathcal{E}^{\bqce}(u^{\rm a} )v,v\rangle > 0, \quad\quad\quad
	\inf_{v\in\mathscr{U}^{1,2}}\langle\delta\mathcal{F}^{\rm bqcf}(u^{\rm a})v,v\rangle > 0.
	\label{stability of B-QCE and B-QHOCF}
\end{align}

For B-QHOCE and B-QHOCF, it is known from \cite{2021_YW_LZ_HW_HOC_IMANUM} that the higher-order Cauchy-Born model satisfies the stability condition. Based on the stability of B-QCE and B-QCF \eqref{stability of B-QCE and B-QHOCF}, it is reasonable to assume that B-QHOCE and B-QHOCF also satisfy the stability condition. A rigorous mathematical proof of this conclusion will be provided in a upcoming work. The stability conditions for B-QHOCE and B-QHOCF can be expressed as follows:
\begin{align}
	\inf_{v\in\mathcal{U}^{\rm bqhoc}}\langle\delta^2\mathcal{E}^{\bqhoce}(\Pi u^{\rm a} )v,v\rangle > 0, \quad
	\inf_{v\in\mathcal{U}^{\rm bqhoc}}\langle\delta\mathcal{F}^{\rm bqhocf}(\Pi u^{\rm a})v,v\rangle > 0.
	\label{stability of B-QHOCE and B-QHOCF}
\end{align}

Before introducing Theorem \ref{a priori of bqhoce and bqhocf}, we first provide a brief review of the existing results. Specifically, the error estimates in \eqref{a priori error of B-QCE} and \eqref{a priori error of B-QCF} correspond to the previously established B-QCE and B-QCF methods \cite{2013_ML_CO_AC_Coupling_ACTANUM, 2016_XL_CO_AS_BK_BQC_Anal_2D_NUMMATH}. These results are included here for comparison with the subsequent findings, serving as a foundation for the error estimates of the blended atomistic-higher-order continuum methods.

%
\begin{theorem}\label{a priori of bqhoce and bqhocf}
	\textbf{({ A priori} error estimate)} Suppose $u^{\rm a}$ is a stable atomistic solution of \eqref{Atomistic variational problem} in the sense of \eqref{atomistic model stability}, and we assume $u^{\rm a}$ and $\Pi u^{\rm a}$ satisfies the stability of \eqref{stability of B-QCE and B-QHOCF} and \eqref{stability of B-QHOCE and B-QHOCF} , then there exists a stable solution $u^{\rm bqce},u^{\rm bqcf}\in\mathscr{U}^{1,2}$ and $u^{\rm bqhoce}$ of problems \eqref{eq: BQCE},\eqref{eq: BQCF},\eqref{eq:BQHOCE},\eqref{eq:B-QHOCF}, respectively, such that
	\begin{align}
		&\Vert\nabla  u^{\rm a}-\nabla u^{\rm bqce}\Vert_{L^2(\Omega)}\leq C(\Vert\nabla^2\beta\Vert_{L^2(\Omega_{\rm b})}+\Vert\nabla\beta\nabla^2  u^{\rm a}\Vert_{L^2(\Omega_{\rm b})}+\mathrm{err}^{\rm cb}_{\bar{\Omega}_{\rm c}}( u^{\rm a})+\Vert\nabla f\Vert_{L^2(\bar{\Omega}_{\rm c})}),\label{a priori error of B-QCE}\\
		&\Vert\nabla  u^{\rm a}-\nabla u^{\rm bqcf}\Vert_{L^2(\Omega)}\leq C(\mathrm{err}^{\rm cb}_{\bar{\Omega}_{\rm c}}( u^{\rm a})+\Vert\nabla f\Vert_{L^2(\bar{\Omega}_{\rm c})}),\label{a priori error of B-QCF}\\
		&\Vert\nabla  \Pi u^{\rm a}-\nabla u^{\rm bqhoce}\Vert_{L^2(\Omega_{\rm b})}\leq C(\Vert\nabla^2\beta\Vert_{L^2(\Omega_{\rm b})}+\Vert\nabla\beta\nabla^2 \Pi u^{\rm a}\Vert_{L^2(\Omega_{\rm b})}+\mathrm{err}^{\rm hoc}_{\bar{\Omega}_{\rm c}}(\Pi u^{\rm a})+\Vert\nabla^3f\Vert_{L^2(\bar{\Omega}_{\rm c})}),\label{a priori error of B-QHOCE}
	\end{align}
	where  $\bar{\Omega}_{\rm c}=\Omega_{\rm b}\cup\Omega_{\rm c}+B_{2r_{\rm cut}+1}$ and $C$ depends on $M^{(j,4)},\,j=2,...,5$.
\end{theorem}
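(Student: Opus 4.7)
The plan is to follow the classical \emph{stability plus consistency} framework for a priori error analysis of a/c coupling schemes: since the estimates \eqref{a priori error of B-QCE} and \eqref{a priori error of B-QCF} are already established in \cite{2016_XL_CO_AS_BK_BQC_Anal_2D_NUMMATH}, I would focus the argument on the new bound \eqref{a priori error of B-QHOCE} and deduce existence of the stable solution $u^{\rm bqhoce}$ as a byproduct. Concretely, I would use the stability hypothesis \eqref{stability of B-QHOCE and B-QHOCF} together with a quantitative inverse function theorem (a Newton/Banach fixed point argument around the candidate $\Pi u^{\rm a}$, in the spirit of the standard a/c analysis) to convert a consistency bound on the residual $\langle \delta \mathcal{E}^{\rm bqhoce}(\Pi u^{\rm a}), v\rangle - (\langle f, v\rangle)$ into the desired $H^1$ error on $\Omega_{\rm b}$. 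The Lipschitz continuity of $\delta^2\mathcal{E}^{\rm bqhoce}$ on a neighborhood of $\Pi u^{\rm a}$ follows from the smoothness bounds $M^{(j,s)}$, which yields the small-residual-implies-close-solution statement.

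The heart of the proof is therefore the consistency estimate. I would start by using that $u^{\rm a}$ satisfies the atomistic Euler--Lagrange equation $\langle\delta\mathcal{E}^{\rm a}(u^{\rm a}),v\rangle=\langle f,v\rangle_\Lambda$, then write
\begin{align*}
\langle\delta\mathcal{E}^{\rm bqhoce}(\Pi u^{\rm a}),v\rangle - (\langle f,v\rangle)
  &= \Big\langle \sum_{\xi\in\Lambda_{\rm a}\cup\Lambda_{\rm b}}(1-\beta(\xi))\,\delta_\xi\mathcal{E}^{\rm a}(\Pi u^{\rm a}) - (1-\beta)\,\delta\mathcal{E}^{\rm a}(u^{\rm a}),\, v\Big\rangle \\
  &\quad + \Big\langle \int_{\Omega_{\rm b}\cup\Omega_{\rm c}} \beta\, \delta W_{\rm hoc}(\nabla \Pi u^{\rm a},\nabla^3\Pi u^{\rm a})\, dx - \beta\,\delta\mathcal{E}^{\rm a}(u^{\rm a}),\, v\Big\rangle \\
  &\quad + \bigl[(\langle f,v\rangle) - \langle f,v\rangle_\Lambda\bigr].
\end{align*}
The first bracket is a pure \emph{blending/ghost--force} term; by writing $(1-\beta)\delta\mathcal{E}^{\rm a}=\delta\mathcal{E}^{\rm a}((1-\beta)\cdot)+[\beta,\delta\mathcal{E}^{\rm a}]$ and expanding finite differences of $\beta$ to second order about interior points of $\Omega_{\rm b}$, the remainder is controlled by $\Vert\nabla^2\beta\Vert_{L^2(\Omega_{\rm b})}+\Vert\nabla\beta\,\nabla^2\Pi u^{\rm a}\Vert_{L^2(\Omega_{\rm b})}$, exactly as in the B-QCE analysis. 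The second bracket, supported in $\overline{\Omega}_{\rm c}$, is the HOC modeling error, and direct application of the per-site bound underlying \eqref{eq: accuracy_HOC} produces the term $\mathrm{err}^{\rm hoc}_{\overline{\Omega}_{\rm c}}(\Pi u^{\rm a})$. The external force discrepancy is handled by a Taylor expansion of $f$ against the quintic interpolant in $\Omega_{\rm b}\cup\Omega_{\rm c}$: the quadrature-type remainder is bounded by $\Vert\nabla^3 f\Vert_{L^2(\overline{\Omega}_{\rm c})}\,\Vert\nabla v\Vert_{L^2}$, which is precisely the replacement of the $\Vert\nabla f\Vert$ term in the B-QCE/B-QCF estimates by a higher-order object consistent with the $\mathbb{P}_5$ regularity in the continuum region.

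The principal obstacle, and the key conceptual point of the theorem, is that the interface contributions $\Vert\nabla^2\beta\Vert_{L^2(\Omega_{\rm b})}$ and $\Vert\nabla\beta\,\nabla^2\Pi u^{\rm a}\Vert_{L^2(\Omega_{\rm b})}$ do \emph{not} benefit from the upgrade to the HOC density: they arise from the fact that $1-\beta$ does not commute with the discrete finite--difference stencils in $\Omega_{\rm b}$, and this structural mismatch persists regardless of the order of the continuum model used in the bulk. Technically, this forces the proof to isolate the leading blending commutator at the \emph{second}-order Taylor level, rather than pushing the expansion to fourth order (which is what gives the improved $\mathrm{err}^{\rm hoc}$ term in $\Omega_{\rm c}$). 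Carefully tracking this asymmetry, and verifying that the quintic interpolation $\Pi u^{\rm a}$ is $C^2$ across the transition $\partial\Omega_{\rm a}\cap\partial\Omega_{\rm b}$ so that the integration-by-parts manipulations used in the HOC consistency step remain valid, is where I would expect most of the technical work to lie; once the consistency bound is in hand, the existence of $u^{\rm bqhoce}$ and the stated error estimate follow from the stability assumption and a standard fixed-point argument.
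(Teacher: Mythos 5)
Your proposal follows essentially the same route as the paper: consistency (split into a blending/ghost-force term bounded by $\Vert\nabla^2\beta\Vert_{L^2(\Omega_{\rm b})}+\Vert\nabla\beta\nabla^2\Pi u^{\rm a}\Vert_{L^2(\Omega_{\rm b})}$, an HOC modeling term on $\bar\Omega_{\rm c}$, and a Bramble--Hilbert-type bound $\Vert\nabla^3 f\Vert_{L^2(\bar\Omega_{\rm c})}$ for the external load) combined with the assumed stability and the quantitative inverse function theorem around $\Pi u^{\rm a}$, with \eqref{a priori error of B-QCE}--\eqref{a priori error of B-QCF} cited from prior work exactly as the paper does. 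The only organizational difference is that the paper channels the consistency estimate through a pointwise stress-error function $R^\beta(u;x)$ (isolating explicitly the extra interface residual $K(u;x)$ produced when the $\nabla^3 v$ terms are integrated by parts against $\beta$), whereas you phrase the same cancellations as a blending commutator plus Taylor expansion; both yield the identical final bound.
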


\begin{remark}[Error estimate for B-QHOCF method]
	It is worth noting that we can anticipate the error estimate for the B-QHOCF method to be
	\begin{equation}
		\Vert\nabla \Pi u^{\rm a}-\nabla u^{\rm bqhocf}\Vert_{L^2(\Omega)} \leq C(\mathrm{err}^{\rm hoc}_{\bar{\Omega}{\rm c}}(\Pi u^{\rm a})+\Vert\nabla^3f\Vert_{L^2(\bar{\Omega}_{\rm c})}).\label{a priori error of B-QHOCF}
	\end{equation}
	This suggests that the B-QHOCF method significantly improves accuracy compared to B-QCE and B-QCF, whereas B-QHOCE achieves only the same accuracy level as B-QCE. We leave the detailed proof and rigorous theorem for the B-QHOCF method to a forthcoming paper, while in this work, we provide numerical validation of this estimate in Section~\ref{sec: Numerical experiments}. The primary objective of this study is to establish a formulation for atomistic-to-higher-order continuum models, explore the feasibility of coupling atomistic models with higher-order continuum models to enhance the accuracy of classical a/c coupling methods, and rigorously analyze the energy-based approach.
\end{remark}

\begin{remark}[Scaling]
	From \cite{2013_ML_CO_AC_Coupling_ACTANUM}, we can obtain $\Vert\nabla^2\beta\Vert_{L^2(\Omega)}\lesssim|\Omega_{\rm b}|^{-3/2}$  and $\Vert\nabla\beta\Vert_{L^\infty(\Omega)}\lesssim|\Omega_b|^{-1}$, where $|\Omega_{\rm b}|$ is the length of $\Omega_b$. In order to determine the convergence order with respect to $\varepsilon$, we let $|\Omega|\leq C|\Omega_{\rm b}|$, where $C$ is a non-zero constant, then $ |\Omega_{\rm b}|^{-1}\leq 1/C \varepsilon$. Thus, $\Vert\nabla^2\beta\Vert_{L^2(\Omega)}+\Vert\nabla\beta\nabla^2 \Pi u^{\rm a}\Vert_{L^2(\Omega)}\lesssim \varepsilon^{2} .$
	Combining the scaling of $\mathrm{err}^{\rm hoc}_{\bar{\Omega}_{\rm c}}(\Pi u^{\rm a})$ and $\mathrm{err}^{\rm cb}_{\bar{\Omega}_{\rm c}}(u^{\rm a})$ in Section \ref{sec: A-C Models}, we summarize the convergence orders of the four methods with respect to $\varepsilon$ in Table \ref{tab: Convergence Rates Details}.

	\begin{table}[H]
		\caption{Comparison of Convergence Rates for Errors}
		\begin{center}
			\begin{tabular}{|c|c|c|c|}
				\hline
				& Coupling Error & Modeling Error & Total Error \\
				\hline
				B-QCE   & $\varepsilon^2$ & $\varepsilon^2$ & $\varepsilon^2$ \\
				B-QHOCE & $\varepsilon^2$ & $\varepsilon^4$ & $\varepsilon^2$ \\
				B-QCF   & - & $\varepsilon^2$ & $\varepsilon^2$ \\
				B-QHOCF & - & $\varepsilon^4$ & $\varepsilon^4$ \\
				\hline
			\end{tabular}
		\end{center}
		\label{tab: Convergence Rates Details}
	\end{table}

\end{remark}

\begin{remark}[Modeling error bound]
	B-QHOCE improves accuracy by incorporating higher-order Cauchy-Born model, but its ability to further enhance accuracy is limited due to inherent interpolation errors and residual terms at the interface. The interpolation errors come from the blending function of the atomistic model, which cannot be improved within the classical B-QCE framework. Additionally, after considering the higher-order Cauchy-Born model, a residual term appears at the atomistic-continuum interface. This term arises due to the non-simply connected region of the atomistic model, and as a result, it cannot be eliminated as long as the atomistic-continuum coupling framework is used. For a detailed analysis, please refer to Section~\ref{sec:sub:Why Can Not B-QHOCE Improve the Accuracy?}.
\end{remark}
\section{{\it A Priori} Analysis}
\label{sec: consistency}

In this section, we provide a rigorous proof of the {\it a priori} error estimate for the B-QHOCE method, explaining why incorporating the HOC model does not necessarily improve the accuracy of the energy-based blended a/c method. While this result may seem counterintuitive, we offer a mathematical justification to clarify the underlying reasons. We begin in Section~\ref{sec:sub: pointwise estimate} by establishing a pointwise estimate of the stress error. Building on this analysis, Section~\ref{sec:sub:Why Can Not B-QHOCE Improve the Accuracy?} examines the limitations of B-QHOCE in enhancing accuracy. We then derive the consistency error estimate in Section~\ref{sec:sub:consistency err}, followed by the final {\it a priori} error estimate in Section~\ref{sec:sub: priori err}.

\subsection{Pointwise estimate of the error in stress}
\label{sec:sub: pointwise estimate}

To derive the pointwise error estimate for stress, we first consider the first variations of the various energy functionals. Specifically, the first variation of $\mathcal{E}^{\rm a}$ is given by:
\begin{equation}
	\left\langle\delta\mathcal{E}^{\rm a}(u),v\right\rangle=\sum_{\xi\in\Lambda}\sum_{\rho\in\mathscr{R}}\phi'_{\rho}\big(D_{\rho}u(\xi)\big)D_{\rho}v(\xi), 
	\label{first order variation of atomistic model}
\end{equation}
and the first variation of $\mathcal{E}^{\rm bqhoce}$ is given by
\begin{align}
	\langle \delta\mathcal{E}^{\rm bqhoce}(u),v\rangle &= \sum_{\xi\in\Lambda}\big(1-\beta(\xi)\big)
	\sum_{\rho\in\mathscr{R}}\phi'_{\rho}\big(D_{\rho}u(\xi)\big)D_{\rho}v(\xi)\nonumber\\
	&+\sum_{\rho\in\mathscr{R}}\int_{\Omega}\beta \phi_{\rho}'\left(\rho\nabla u+\frac{\rho^3}{24}\nabla^3u\right)\left(\rho\nabla v+\frac{\rho^3}{24}\nabla^3v\right)\,{\rm d}x.\label{first order variation of bqhoce}
\end{align}

Given a lattice function $v$, we define the nodal interpolation of $v$ by 
\begin{eqnarray}\label{eq:vhat}
	\hat{v}:=\sum_{\xi\in\mathbb{Z}}v(\xi)\zeta(x-\xi), \quad \forall \xi\in\mathbb{Z},
\end{eqnarray}
where $\zeta\in W^{3,\infty}$ and $\zeta(\cdot-\xi)$ is a basis function associated with the lattice site $\xi$ \cite[Section 2.2]{2021_YW_LZ_HW_HOC_IMANUM}. To convert $\delta\mathcal{E}^{\rm a}$ into a form that is local in $\nabla v$, we introduce an alternative interpolation based on the convolution of the nodal basis interpolation $\hat{v}$ with $\zeta$: 
$$
\tilde{v}(x):=(\zeta*\hat{v})(x)=\int_{\Omega}\zeta(x-y)\hat{v}(y)\,{\rm d}y.
$$ 
Hence, we have 
$$
D_{\rho}\tilde{v}(\xi)=\int_{0}^{1}\nabla_{\rho}\tilde{v}(\xi+t\rho)\,{\rm d}t=\int_{\mathbb{R}}\int_{0}^{1}\zeta(\xi+t\rho-x)\,{\rm d}t\cdot\nabla_{\rho}\hat{v}\,{\rm d}x:=\int_{\mathbb{R}}\chi_{\xi,\rho}(x)\cdot\nabla_{\rho}\hat{v}\,{\rm d}x,
$$
where 
$
\chi_{\xi,\rho}(x)=\int_{0}^{1}\zeta(\xi+t\rho-x)\,{\rm d}t.
$ 
We then replace the test function $v$ in 
\eqref{first order variation of atomistic model} by $\tilde{v}$ to obtain 
\begin{align}
	\left\langle\delta\mathcal{E}^{\rm a}(u),\tilde{v}\right\rangle&=\sum_{\xi\in\Lambda}\sum_{\rho\in\mathscr{R}}\phi'_{\rho}\big(D_{\rho}u(\xi)\big)\int_{\mathbb{R}}\chi_{\xi,\rho}(x)\cdot\nabla_{\rho}\hat{v}\,{\rm d}x\nonumber\\
	&=\int_{\mathbb{R}}\bigg[\sum_{\xi\in\Lambda}\sum_{\rho\in\mathscr{R}}\rho\phi_{\rho}'\big(D_{\rho}u(\xi)\big)\chi_{\xi,\rho}(x)\bigg]\nabla\hat{v}\,{\rm d}x\nonumber\\
	&=\int_{\Omega}\bigg[\sum_{\xi\in\mathbb{Z}}\sum_{\rho\in\mathscr{R}}\rho\phi_{\rho}'\big(D_{\rho}u(\xi)\big)\chi_{\xi,\rho}(x)\bigg]\nabla\hat{v}\,{\rm d}x\nonumber\\
	&=:\int_{\Omega}S^{\rm a}(u;x)\cdot\nabla\hat{v}\,{\rm d}x, \nonumber
\end{align}
where the atomistic stress tensor is given by 
$$
S^{\rm a}(u;x)=\sum_{\xi\in\mathbb{Z}}\sum_{\rho\in\mathscr{R}}\rho\phi'_\rho\big(D_\rho u(\xi)\big)\chi_{\xi,\rho}(x).
$$

Given \( v \in \mathcal{U}^{\rm bqhoc} \), we define \( v^{\rm a} = \Pi' v \) where \( \Pi': \mathcal{U}^{\rm bqhoc} \to \mathscr{U}^{1,2} \) satisfies  
\begin{align}
	\label{equ: Pi'}
	\widetilde{(\Pi' v)}(\xi) &= v(\xi), \qquad \quad ~ \text{for } \xi \in \Lambda^{\rm a}, \nonumber \\
	\Pi'v(\xi) &= (\zeta * v)(\xi), \quad \text{for } \xi \in \Lambda^{\rm b} \cup \Lambda^{\rm c}.
\end{align}
The stress of the higher-order continuum model is given by
\begin{align} S^{\rm hoc}(u; x) &= \sum_{\rho \in \mathscr{R}} \Big[\frac{\rho^6}{576}\phi''_{\rho}(\rho\nabla u+\frac{\rho^3}{24}\nabla^3 u)\nabla^5 u  +\frac{\rho^3}{24}\phi'''_{\rho}(\rho\nabla u+\frac{\rho^3}{24}\nabla^3 u)(\rho\nabla^2u+\frac{\rho^3}{24}\nabla^4 u)^2 \nonumber\\
	&+\frac{\rho^4}{24}\phi''_{\rho}(\rho\nabla u+\frac{\rho^3}{24}\nabla^3 u)\nabla^3 u\nonumber+\rho\phi'_{\rho}(\rho\nabla u+\frac{\rho^3}{24}\nabla^3 u)],\nonumber \end{align} 
which is derived from \cite[Equation 4.6]{2021_YW_LZ_HW_HOC_IMANUM}.

Using the properties of $\Pi'$ defined by~\eqref{equ: Pi'}, we can transform the atomistic model part of the error into an integral form. Then through integration by parts and applying the periodic boundary conditions of the test function, we can obtain  
\begin{align}
	\langle \delta\mathcal{E}^{\rm bqhoce}(u), v \rangle - \langle \delta\mathcal{E}^{\rm a}(u), \widetilde{v^{\rm a}} \rangle 
	&= \sum_{\rho \in \mathscr{R}} \int_{\Omega} \beta \phi_{\rho}'\left(\rho\nabla v + \frac{\rho^3}{24}\nabla^3 v\right) \,{\rm d} x - \sum_{\xi \in \Lambda} \beta(\xi) \sum_{\rho \in \mathscr{R}} \phi'_{\rho} D_{\rho}v^{\rm a}(\xi)\nonumber \\
	&= \int_{\Omega} \left[ \beta S^{\rm hoc}(u; x) + K(u; x) - \sum_{\rho,\xi} \beta(\xi) \chi_{\xi,\rho}(x) \rho \phi'_{\rho} \right] \nabla v \, \,{\rm d} x \nonumber\\
	&=: \int_{\Omega} R^\beta(u; x) \, \,{\rm d} x,\nonumber
\end{align}
where 
\begin{equation}
	K(u;x)=\int_{\Omega}-[\frac{\rho^3}{12}\nabla\beta\phi''_{\rho}(\rho\nabla u+\frac{\rho^3}{24}\nabla^3 u)(\rho\nabla^2 u+\frac{\rho^3}{24}\nabla^4 u)+\frac{\rho^3}{24}\nabla^2\beta\phi'_{\rho}(\rho\nabla u+\frac{\rho^3}{24}\nabla^3 u)]{\rm d} x\nonumber
\end{equation}is the residual term introduced at the interface due to the introduction of the higher-order continuum model
and 
\begin{equation}
	\label{equ:total err}
	R^\beta(u; x) = \beta S^{\rm hoc}(u; x) + K(u; x) - \sum_{\rho \in \mathscr{R}} \sum_{\xi \in \mathbb{Z}} \beta(\xi) \chi_{\xi,\rho}(x) \rho \phi'_{\rho}\big(D_{\rho}u(\xi)\big)
\end{equation}
is the total residual term.

\begin{lemma}\label{lem: |R(u;x|}
	Let $u\in W^{5,\infty},x\in\Omega$, then
	\begin{align}
		|R^\beta(u;x)|\leq&~ C(\Vert\nabla^2\beta\Vert_{L^\infty(\nu_x)}+\Vert\nabla\beta\nabla^2 u\Vert_{L^\infty(\nu_x)}+\Vert\nabla^5u\Vert_{L^\infty(\nu_x)}+\Vert\nabla^2u\nabla^4u\Vert_{L^\infty(\nu_x)}\nonumber\\
		&+\Vert\nabla^3u(\nabla^2u)^2\Vert_{L^\infty(\nu_x)}+\Vert\nabla^3u\Vert^2_{{L^\infty(\nu_x)}}+\Vert\nabla^2u\Vert^4_{L^\infty(\nu_x)}),
	\end{align}
	where $C$ depends on $M^{(j,4)},j=2,...,5$ and $\nu_x:= B_{2r_{\rm cut}+1}(x)$ is the neighborhood of some $x\in\mathbb{R}$.
\end{lemma}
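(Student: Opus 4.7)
The plan is to decompose $R^\beta(u;x)$ into two physically distinct contributions: a \emph{modeling error} that measures the mismatch between the higher-order continuum stress $S^{\rm hoc}$ and the atomistic stress $S^{\rm a}$ when $\beta$ is treated as locally constant, and an \emph{interface/coupling error} that captures the effect of a non-constant blending function. By adding and subtracting $\beta(x)\,S^{\rm a}(u;x)=\beta(x)\sum_{\rho,\xi}\chi_{\xi,\rho}(x)\rho\phi'_\rho(D_\rho u(\xi))$ in \eqref{equ:total err}, I would first rewrite
\[
R^\beta(u;x)=\beta(x)\bigl[S^{\rm hoc}(u;x)-S^{\rm a}(u;x)\bigr]+K(u;x)-\sum_{\rho\in\mathscr{R}}\sum_{\xi\in\mathbb{Z}}\bigl[\beta(\xi)-\beta(x)\bigr]\chi_{\xi,\rho}(x)\,\rho\,\phi'_\rho\bigl(D_\rho u(\xi)\bigr).
\]

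For the modeling-error piece $\beta(x)[S^{\rm hoc}-S^{\rm a}]$, I would directly invoke the higher-order Cauchy--Born stress analysis of \cite{2021_YW_LZ_HW_HOC_IMANUM}. Namely, Taylor-expand $\phi'_\rho(D_\rho u(\xi))$ around the HOC state $\phi'_\rho\bigl(\rho\nabla u+\frac{\rho^3}{24}\nabla^3 u\bigr)$, keeping the fifth-order truncation terms and the nonlinear cross-derivative remainders, then sum against $\chi_{\xi,\rho}$. Using $\|\beta\|_{L^\infty}\le 1$, the boundedness assumption $M^{(j,4)}<\infty$ for $j=2,\ldots,5$, and the compact support of $\chi_{\xi,\rho}(x)$ inside $\nu_x$, this step produces the five HOC-type contributions $\|\nabla^5 u\|_{L^\infty(\nu_x)}$, $\|\nabla^2 u\,\nabla^4 u\|_{L^\infty(\nu_x)}$, $\|\nabla^3 u(\nabla^2 u)^2\|_{L^\infty(\nu_x)}$, $\|\nabla^3 u\|^2_{L^\infty(\nu_x)}$, and $\|\nabla^2 u\|^4_{L^\infty(\nu_x)}$ in the target bound.

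For the interface piece, I would Taylor-expand $\beta$ about $x$ up to second order,
\[
\beta(\xi)-\beta(x)=\nabla\beta(x)(\xi-x)+\tfrac{1}{2}\nabla^2\beta(x)(\xi-x)^2+r_\beta(\xi,x),
\]
with the cubic remainder $|r_\beta(\xi,x)|\lesssim \|\nabla^2\beta\|_{L^\infty(\nu_x)}|\xi-x|^2$ (since $\beta\in C^{2,1}$), and simultaneously expand $\phi'_\rho(D_\rho u(\xi))$ via the HOC expansion. The symmetry of the basis function $\zeta$ entering \eqref{eq:vhat} renders the moments $\sum_\xi(\xi-x)\chi_{\xi,\rho}(x)$ and $\sum_\xi(\xi-x)^2\chi_{\xi,\rho}(x)$ equal to explicit $\rho$-dependent constants, matching precisely the coefficients $\tfrac{\rho^3}{12}$ and $\tfrac{\rho^3}{24}$ in the definition of $K(u;x)$. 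Thus the leading $\nabla\beta\cdot\phi''_\rho\cdot(\rho\nabla^2 u+\tfrac{\rho^3}{24}\nabla^4 u)$ and $\nabla^2\beta\cdot\phi'_\rho$ contributions cancel exactly against $K(u;x)$, leaving a residue bounded pointwise by a linear combination of $\|\nabla^2\beta\|_{L^\infty(\nu_x)}$, $\|\nabla\beta\,\nabla^2 u\|_{L^\infty(\nu_x)}$, and the HOC-type terms already controlled above.

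The main obstacle I foresee is in the second step: carefully bookkeeping all cross-terms produced by simultaneously expanding $\beta(\xi)-\beta(x)$ and $\phi'_\rho(D_\rho u(\xi))$, identifying the exact moments of $\chi_{\xi,\rho}(x)$ that must match the explicit prefactors in $K(u;x)$, and confirming that every leftover mixed term (for instance $\nabla\beta\cdot\nabla^3 u\cdot\nabla^2 u$-type products) can be absorbed either into $\|\nabla\beta\,\nabla^2 u\|_{L^\infty(\nu_x)}$ via $|\nabla\beta|\le\|\nabla\beta\|_{L^\infty}$ and smoothness of $u$, or into one of the HOC-type norms. Once this algebraic cancellation is verified cleanly, summing the estimates from both pieces yields the claimed pointwise bound with the stated dependence on $M^{(j,4)}$, $j=2,\ldots,5$.
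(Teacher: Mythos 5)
Your overall architecture matches the paper's: split $R^\beta$ into the modeling error $\beta(x)\bigl[S^{\rm hoc}-S^{\rm a}\bigr]$ (handled by citing the HOC stress consistency analysis of \cite{2021_YW_LZ_HW_HOC_IMANUM}) plus an interface error coming from $\beta(\xi)-\beta(x)$, then use Taylor expansions of $\beta$ and of $\phi'_\rho(D_\rho u(\xi))$ together with the moment identities $\sum_\xi(\xi-x)^k\chi_{\xi,\rho}(x)=(-\rho)^k/(k+1)$. However, there is a genuine gap in your treatment of the interface piece. The lowest-order term produced by your expansion is
\begin{equation*}
\nabla\beta(x)\sum_{\rho\in\mathscr{R}}\rho\,\phi'_\rho\bigl(\rho\nabla u(x)\bigr)\sum_{\xi\in\mathbb{Z}}(\xi-x)\chi_{\xi,\rho}(x)
=-\tfrac{1}{2}\,\nabla\beta(x)\sum_{\rho\in\mathscr{R}}\rho^2\,\phi'_\rho\bigl(\rho\nabla u(x)\bigr),
\end{equation*}
which is of size $\Vert\nabla\beta\Vert_{L^\infty(\nu_x)}$ — a quantity that does \emph{not} appear in the claimed bound and is only first order in $|\Omega_{\rm b}|^{-1}$. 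This term cannot cancel against $K(u;x)$: the $\nabla\beta$-part of $K$ carries a factor $\phi''_\rho(\cdot)\bigl(\rho\nabla^2u+\tfrac{\rho^3}{24}\nabla^4u\bigr)$, i.e.\ it always comes with a $\nabla^2u$, whereas the term above does not. The paper disposes of it by the point symmetry of the pair interaction: symmetrizing the sum over $\rho\in\pm\mathscr{R}$ makes $\sum_{\rho\in\pm\mathscr{R}}\rho|\rho|\phi'_\rho(|\rho|\nabla u)$ vanish identically (equation \eqref{equ: Symmetry of 1st}). Without this observation your argument does not yield the stated estimate, and it is precisely the step that turns a first-order blended ghost force into the second-order quantities $\Vert\nabla^2\beta\Vert$ and $\Vert\nabla\beta\,\nabla^2u\Vert$.

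A secondary, less serious point: your claim that the $\nabla\beta\,\phi''_\rho(\rho\nabla^2u+\tfrac{\rho^3}{24}\nabla^4u)$ and $\nabla^2\beta\,\phi'_\rho$ contributions ``cancel exactly'' against $K(u;x)$ is stronger than what holds and stronger than what is needed. The paper does not attempt any such cancellation; it bounds $K(u;x)$ directly by $M^{(1,2)}\Vert\nabla^2\beta\Vert_{L^\infty(\nu_x)}+M^{(2,4)}\Vert\nabla\beta\,\nabla^2u\Vert_{L^\infty(\nu_x)}$ and, separately, bounds the first-order correction $T_3$ of the interface expansion by $M^{(2,4)}\Vert\nabla\beta\,\nabla^2u\Vert_{L^\infty(\nu_x)}$; both land in terms already present in the target. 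Your route would still close here even if the cancellation is only approximate, but you should replace ``cancel exactly'' by a direct estimate of each piece, and you must add the symmetry argument for the ghost-force term identified above.
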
 

\begin{proof}
	Firstly, the second term in \eqref{equ:total err} can be estimated by the inverse estimates for polynomials \cite{2002_Ciarlet_FEM}[The. 3.2.6] that
	$$
	|K(u;x)|\lesssim M^{(1,2)}\Vert\nabla^2\beta\Vert_{L^{\infty}(\nu_x)}+M^{(2,4)}\Vert\nabla\beta\nabla^2u\Vert_{L^{\infty}(\nu_x)}\nonumber.$$ 
	Then we expand  $$\beta(\xi)=\beta+\nabla\beta\cdot(\xi-x)+O(\delta_2)$$ and $\phi_{\rho}'(D_{\rho}u)$  in $\nabla_{\rho}u(x)$
	, where $\delta_2:=\Vert\nabla^2\beta\Vert_{L^{\infty}(\nu_x)}$, which yields
	\begin{align}
		T(u;x):=&~\beta S^{\rm hoc}-\sum_{\rho\in\mathscr{R}}\sum_{\xi\in\mathbb{Z}}\beta(\xi)\chi_{\xi,\rho}(x)\rho\phi'_{\rho}\big(D_{\rho}u(\xi)\big) \nonumber\\
		=&~\beta S^{\rm hoc}-\sum_{\rho\in\mathscr{R}}\sum_{\xi\in\mathbb{Z}}[\beta+\nabla\beta\cdot(\xi-x)]\big(\rho\phi'_\rho(D_\rho u)\big)\chi_{\xi,\rho}+O(\delta_2)\nonumber\\
		=&~\beta[S^{\rm hoc}-S^{\rm a}]-\nabla\beta\sum_{\rho\in\mathscr{R}}\rho\sum_{\xi\in\mathbb{Z}}\phi_{\rho}'\big(D_{\rho}u(\xi)\big)(\xi-x)\chi_{\xi,\rho}+O(\delta_2)\nonumber\\
		=&~\beta[S^{\rm hoc}-S^{\rm a}]-\nabla\beta\sum_{\rho\in\mathscr{R}}\rho\sum_{\xi\in\mathbb{Z}}\phi_{\rho}'(\rho\nabla u)(\xi-x)\chi_{\xi,\rho}\nonumber\\
		&-\nabla\beta\sum_{\rho\in\mathscr{R}}\rho\sum_{\xi\in\mathbb{Z}}\phi_{\rho}''(\mu_2 )(D_\rho u(\xi)-\rho\nabla u)(\xi-x)\chi_{\xi,\rho}+O(\delta_2)\nonumber\\
		=:&~T_1-T_2-T_3+O(\delta_2),\label{equ: R(u;x) 2th term}
	\end{align}
	where $\mu_2\in {\rm conv}\left\{\nabla_{\rho}u,D_\rho u(\xi)\right\}$. For the second term of \eqref{equ: R(u;x) 2th term}, applying the equation 
	$$\sum_{\xi\in\mathbb{Z}}\chi_{\xi,\rho}(x)(\xi-x)^k=\frac{(-\rho)^k}{k+1},\,k=0,1,2,3$$ from \cite[Lemma 4.1]{2021_YW_LZ_HW_HOC_IMANUM}, we can obtain:
	\begin{align}
		\label{equ: Symmetry of 1st}
		T_2=\nabla\beta\sum_{\xi\in\mathbb{Z}}(\xi-x)\chi_{\xi,\rho}(x)\frac{1}{2}\sum_{\rho\in\pm\mathscr{R}}|\rho|\phi_{\rho}'(|\rho|\nabla u)=-\frac{1}{4}\nabla\beta\sum_{\rho\in\pm\mathscr{R}}\rho|\rho|\phi_{\rho}'(|\rho|\nabla u)=0.
	\end{align}
	Inserting the expansion 
	$$
	D_\rho u(\xi)=\rho\nabla u(x)+\nabla^2u(x)\rho(\xi-x+\frac{1}{2}\rho)+O(\epsilon_3)
	$$  for the third term of \eqref{equ: R(u;x) 2th term}, where $\epsilon_j = \Vert\nabla^j u\Vert_{L^{\infty}(\nu_x)}$ , as well as applying the same equation as mentioned above, one can deduce
	$$|T_3|=\left|\nabla\beta\nabla^2 u\sum_{\rho\in\mathscr{R}}\rho\phi_{\rho}''(\rho\nabla u )\sum_{\xi\in\mathbb{Z}}(\xi-x+\frac{1}{2}\rho)(\xi-x)\chi_{\xi,\rho}\right|\leq M^{(2,4)}\Vert\nabla\beta\nabla^2 u\Vert_{L^\infty(\nu_x)}.$$
	Using similar techniques as above, an error estimate for $T_1$ can be obtained, 
	$$
	|T_1|\lesssim(\Vert\nabla^5u\Vert_{L^\infty(\nu_x)}+\Vert\nabla^2u\nabla^4u\Vert_{L^\infty(\nu_x)}+\Vert\nabla^3u(\nabla^2u)^2\Vert_{L^\infty(\nu_x)}+\Vert\nabla^3u\Vert^2_{{L^\infty(\nu_x)}}+\Vert\nabla^2u\Vert^4_{L^\infty(\nu_x)}),
	$$
	and the detailed proof provided in \cite[Lemma 4.2]{2021_YW_LZ_HW_HOC_IMANUM}. Combining the error estimates for $T_1, T_2,T_3$ , we obtain the stated results. 
\end{proof}

	
	\subsection{Discussion}
	\label{sec:sub:Why Can Not B-QHOCE Improve the Accuracy?}

	B-QHOCE is based on BQCE and introduces the higher-order Cauchy-Born model in the continuum region with the expectation of achieving higher accuracy. However, as shown in the lemma \ref{lem: |R(u;x|}, its dominant error terms remain the same as those of BQCE. Next, we will analyze the sources of these dominant error terms and discuss why it is not possible to further improve accuracy within this framework.
	
	As shown in~\eqref{equ:total err}, we decompose point-wise error of stress $R^\beta(u; x)$ into two terms, $T(u;x)$ and $K(u;x)$, i.e., 
	\begin{align}
		R^\beta(u; x) &= \beta S^{\rm hoc}(u; x) - \sum_{\rho \in \mathscr{R}} \sum_{\xi \in \mathbb{Z}} \beta(\xi) \chi_{\xi,\rho}(x) \rho \phi'_{\rho}\big(D_{\rho}u(\xi)\big) + K(u; x) \nonumber\\
		&=:T(u;x)+K(u;x).
	\end{align}
	As established in Theorem~\ref{a priori of bqhoce and bqhocf}, certain error terms persist, specifically:  
	
	\begin{itemize}
		\item \textit{Interpolation error of $\beta$:} This arises due to the discrete nature of the atomistic model, where the blending function $\beta$ is interpolated between lattice points, introducing unavoidable discretization errors.
		\item \textit{Residual term $K(u;x)$:} The introduction of the higher-order Cauchy-Born model at the interface generates an additional residual term, which does not vanish within the current coupling framework.
	\end{itemize}
	
	In the following, we analyze these two terms in detail.
	
	\subsubsection{Blended ghost force}
	\label{sec:sub:sub: beta}
	We define $I_0$ as the piecewise constant interpolation, then for the atomistic model, the blending function $\beta$ takes values only at the lattice points, which can be understood as a piecewise constant interpolation. Therefore, we can decompose the error into the stress error $E_{\rm stress} $ and the interpolation error  
	$E_{\rm interp}$, and we have
	\begin{align}
		T(u;x)&=\beta(x) S^{\rm hoc}(u;x)-\sum_{\rho\in\mathscr{R}}\sum_{\xi\in\mathbb{Z}}\beta(\xi)\chi_{\xi,\rho}(x)\rho\phi'_{\rho}\big(D_{\rho}u(\xi)\big),\nonumber\\
		&=\beta(x) S^{\rm hoc}(u;x)-\sum_{\rho\in\mathscr{R}}\sum_{\xi\in\mathbb{Z}}I_0\beta(x)\chi_{\xi,\rho}(x)\rho\phi'_{\rho}\big(D_{\rho}u(\xi)\big)\nonumber\\
		&=\beta(x) (S^{\rm hoc}- S^{\rm a})+\sum_{\rho\in\mathscr{R}}\sum_{\xi\in\mathbb{Z}}\big(\beta(x)-I_0\beta(x)\big)\chi_{\xi,\rho}(x)\rho\phi'_{\rho}\big(D_{\rho}u(\xi)\big) \\ \nonumber &=: E_{\text{stress}}+E_{\text{interp}}.\nonumber
	\end{align}  
	Here $E_{\text{stress}}$  represents the error in stress between the higher-order continuum model and the atomistic model, which achieves improved accuracy due to the use of the higher-order continuum model. Meanwhile, $E_{\text{interp}}$ is the estimate of the interpolation error for the blending function $\beta$ at non-lattice points, the first-order term vanishes due to the point symmetry in the interaction potential \eqref{equ: Symmetry of 1st}, improving the interpolation error estimate from first-order to second-order.
	
	Therefore, $ E_{\text{interp}} $ is the dominant error term of $T(u;x) $, and its origin is essentially due to ghost forces. Improving the estimation of the interpolation error for $ \beta $ within the classic B-QCE framework is not feasible, as the atomistic model can only take values at the lattice points in the blending region, which corresponds to the piecewise constant interpolation $I_0$. Therefore, $\Vert\nabla^2\beta\Vert_{L^2}$   is difficult to improve further. However, we can refer to the approach in the blended ghost force correction (BGFC) method~\cite{2016_CO_LZ_GForce_Removal_SISC}, where an additional ghost force correction term is introduced to optimize the constant in front of $\Vert\nabla^2\beta\Vert_{L^2}$, thereby improving the overall accuracy. 
	

	\subsubsection{Residual term K(u;x)}
	\label{sec:sub:sub: K(u;x)}
	
	It is worthwhile noting that $K(u;x)$ does not exist in the classic BQCE. It is essentially generated at the a/c interface when the test function $\nabla ^3v$ is converted to $\nabla v$ by the process of integration by parts. Due to the more intuitive nature of this process in higher dimensions, we explain it from a higher-dimensional perspective. Additionally, this error term persists in higher dimensions, where the coupling between the atomistic and continuum regions generates residuals at the interface, which cannot be eliminated. Therefore, this term must be particularly considered when dealing with high-dimensional coupling.
	
	We present the difference of the process of the integration by parts for HOC and B-QHOCE in a continuum setting:
	\begin{align}
		\text{HOC}:\qquad\int_{\Omega} f\,\nabla^3 v\,{\rm d}V&=\oint_{\partial \Omega} f\,  \nabla^2v\,{\rm d} s-\int_{\Omega}  \nabla f \, \nabla^2v\,{\rm d}V\label{hoc integration by parts}\\
		\text{B-QHOCE}:\int_{\Omega_{\rm b}\cup\Omega_{\rm c}} f\,\nabla^3 v\,{\rm d}V&=\oint_{\partial \Omega\cup\partial\Omega_{\rm a}} f\,  \nabla^2v\, {\rm d} s-\int_{\Omega_{\rm b}\cup\Omega_{\rm c}}  \nabla f \, \nabla^2v\,{\rm d}V\label{B-QHOCE integration by parts}
	\end{align}
	For the pure higher-order continuum model, $K(u;x)$ does not appear because the function space can be controlled to make the line integral on the right-hand side of \eqref{hoc integration by parts} equal to 0 at $\partial\Omega$.
	
	When we consider the special case of \( \beta \), where it takes the characteristic function  (1 in \( \Omega_{\rm c} \cup \Omega_{\rm b} \) and 0 in \( \Omega_{\rm a} \)), a blended interface is formed between the atomistic and continuum regions. In this case, a new error term is generated at the boundary of \( \Omega_{\rm a} \) in \eqref{B-QHOCE integration by parts}, and this error term cannot be eliminated. This residual cannot be removed because we cannot predefine the boundary conditions at the atomistic-continuum interface. Thus the residual \( K(u;x) \) persists and cannot be eliminated in the context of a/c coupling methods.
	
	\subsubsection{Summary of Error Sources and Limitations}
	\label{sec:sub:sub: summary Limitations}
	Through the analysis in Sections \ref{sec:sub:sub: beta} and \ref{sec:sub:sub: K(u;x)}, it is clear that for the error term $ T(u;x) $, although the accuracy of the interpolation error estimate for $ \beta $ cannot be directly improved, it can be enhanced by applying the ghost force correction method. As for the term $ K(u;x) $, it is a new error introduced after the incorporation of the higher-order Cauchy-Born  model. This error arises because, after using atomistic-continuum coupling methods, the continuum region is no longer a simply connected domain, leading to this error at the interface. Therefore, this is the reason why B-QHOCE cannot improve the accuracy further.
	
	On the other hand, for the B-QHOCF method, the coupling is achieved by blending forces, which means directly blending the weak form. As a result, there is no ghost force error\cite{2016_XL_CO_AS_BK_BQC_Anal_2D_NUMMATH}, and no error is introduced from the process of converting $\nabla^3v$ to $\nabla v$. Therefore, the accuracy of the continuum model in the continuum region directly determines the overall accuracy of the coupling method, thus allowing B-QHOCF to achieve higher accuracy.

\subsection{Consistency error estimate of the B-QHOCE method}
	\label{sec:sub:consistency err}

In this section, we provide the consistency error estimate of the proposed B-QHOCE method.

\begin{theorem}\label{the : consistency of B-QHOCE}
	\textbf{(Consistency error of B-QHOCE)} Let $u\in\mathscr{U}^{1,2}$ and $f,v\in\mathcal{U}^{\rm bqhoc}$, then we can obtain
	\begin{align}
		\langle \delta\mathcal{E}^{\rm bqhoce}(\Pi u),v\rangle-\langle\delta\mathcal{E}^{\rm a}(u),\widetilde{v^{\rm a}}\rangle\leq& ~C(\Vert\nabla^2\beta\Vert_{L^2(\Omega_{\rm b})}+\Vert\nabla\beta\nabla^2 \Pi u\Vert_{L^2(\Omega_{\rm b})}+\Vert\nabla^5\Pi u\Vert_{L^2(\bar{\Omega}_{\rm c})}\nonumber\\
		&+\Vert\nabla^2\Pi u\nabla^4\Pi u\Vert_{L^2(\bar{\Omega}_{\rm c})}+\Vert\nabla^3\Pi u(\nabla^2\Pi u)^2\Vert_{L^2(\bar{\Omega}_{\rm c})}\nonumber\\
		&+\Vert\nabla^3\Pi u\Vert^2_{L^4(\bar{\Omega}_{\rm c})}\Vert\nabla^2\Pi u\Vert^4_{L^8(\bar{\Omega}_{\rm c})})\Vert\nabla v\Vert_{L^2(\Omega)},\label{consistency of internal energy}
	\end{align}
	where $\bar{\Omega}_{\rm c}=\Omega_{\rm b}\cup\Omega_{\rm c}+B_{2r_{\rm cut}+1}$ and $C$ depends on $M^{(j,4)},j=2,...5$. 
\end{theorem}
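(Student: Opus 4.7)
The plan is to reduce the claim to the pointwise stress bound of Lemma~\ref{lem: |R(u;x|} via Cauchy--Schwarz, and then promote the local $L^\infty$ bounds on the residual $R^\beta$ to the global Lebesgue norms appearing on the right-hand side of~\eqref{consistency of internal energy}.

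First I invoke the algebraic identity derived immediately above Lemma~\ref{lem: |R(u;x|}, applied with $\Pi u\in\mathcal{U}^{\rm bqhoc}$:
\begin{equation*}
\langle\delta\mathcal{E}^{\rm bqhoce}(\Pi u),v\rangle-\langle\delta\mathcal{E}^{\rm a}(\Pi u),\widetilde{v^{\rm a}}\rangle=\int_{\Omega}R^\beta(\Pi u;x)\,\nabla v\,{\rm d}x.
\end{equation*}
Because $\mathcal{E}^{\rm a}$ depends only on the lattice values of its argument and $\Pi u|_\Lambda=u|_\Lambda$, the term $\langle\delta\mathcal{E}^{\rm a}(\Pi u),\widetilde{v^{\rm a}}\rangle$ coincides with $\langle\delta\mathcal{E}^{\rm a}(u),\widetilde{v^{\rm a}}\rangle$, so the left-hand side equals the quantity to be estimated in~\eqref{consistency of internal energy}. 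A Cauchy--Schwarz inequality in $L^2(\Omega)$ extracts the factor $\Vert\nabla v\Vert_{L^2(\Omega)}$, reducing the proof to an estimate of $\Vert R^\beta(\Pi u;\cdot)\Vert_{L^2(\Omega)}$ by the bracketed quantity.

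Next I substitute the pointwise bound of Lemma~\ref{lem: |R(u;x|}, square, and integrate. Because $\nabla\beta$ and $\nabla^2\beta$ vanish outside $\Omega_{\rm b}$ and the HOC contribution to $R^\beta$ is supported in $\Omega_{\rm b}\cup\Omega_{\rm c}$, the pointwise estimate is effectively supported on $\bar\Omega_{\rm c}=\Omega_{\rm b}\cup\Omega_{\rm c}+B_{2r_{\rm cut}+1}$. To convert the localized norms I use the standard Fubini-type swap
\begin{equation*}
\int_\Omega \Vert g\Vert_{L^\infty(\nu_x)}^p\,{\rm d}x\;\lesssim\;\Vert g\Vert_{L^p(\bar\Omega_{\rm c})}^p,
\end{equation*}
which follows from the uniform boundedness of $|\nu_x|$ together with finite-dimensional norm equivalence between $L^\infty$ and $L^p$ on the piecewise-polynomial interpolant $\Pi u$. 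This directly yields the contributions $\Vert\nabla^2\beta\Vert_{L^2(\Omega_{\rm b})}$ and $\Vert\nabla\beta\,\nabla^2\Pi u\Vert_{L^2(\Omega_{\rm b})}$ (restricted to $\Omega_{\rm b}$ since $\nabla\beta$ is supported there), as well as $\Vert\nabla^5\Pi u\Vert_{L^2(\bar\Omega_{\rm c})}$, $\Vert\nabla^2\Pi u\,\nabla^4\Pi u\Vert_{L^2(\bar\Omega_{\rm c})}$ and $\Vert\nabla^3\Pi u(\nabla^2\Pi u)^2\Vert_{L^2(\bar\Omega_{\rm c})}$.

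The remaining factor $\Vert\nabla^3\Pi u\Vert^2_{L^4(\bar\Omega_{\rm c})}\Vert\nabla^2\Pi u\Vert^4_{L^8(\bar\Omega_{\rm c})}$ is obtained by handling the last two pointwise contributions $\Vert\nabla^3\Pi u\Vert^2_{L^\infty(\nu_x)}$ and $\Vert\nabla^2\Pi u\Vert^4_{L^\infty(\nu_x)}$ in tandem: after squaring, their joint contribution to $\Vert R^\beta\Vert_{L^2}^2$ is bounded by $\int_{\bar\Omega_{\rm c}}\Vert\nabla^3\Pi u\Vert^4_{L^\infty(\nu_x)}\Vert\nabla^2\Pi u\Vert^8_{L^\infty(\nu_x)}\,{\rm d}x$, and H\"older with conjugate exponents $2,\,2$ controls this by $\Vert\nabla^3\Pi u\Vert^4_{L^4(\bar\Omega_{\rm c})}\Vert\nabla^2\Pi u\Vert^8_{L^8(\bar\Omega_{\rm c})}$; a square root then recovers the claimed product. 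Collecting all contributions completes the estimate.

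The main technical obstacle will be the $L^\infty(\nu_x)\to L^p(\bar\Omega_{\rm c})$ passage uniformly across the atomistic--continuum interface $\partial\Omega_{\rm a}$, where $\Pi u$ switches from piecewise affine to piecewise quintic and is only globally $C^0$: elementwise inverse estimates must be applied with constants independent of the mesh, and the buffer radius $2r_{\rm cut}+1$ must be chosen large enough to contain every neighborhood $\nu_x$ that intersects $\Omega_{\rm b}\cup\Omega_{\rm c}$. Once this uniform control is established, the argument reduces to bookkeeping on Lemma~\ref{lem: |R(u;x|}.
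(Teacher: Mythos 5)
Your strategy is the same as the paper's: the identity $\langle\delta\mathcal{E}^{\rm bqhoce}(\Pi u),v\rangle-\langle\delta\mathcal{E}^{\rm a}(\Pi u),\widetilde{v^{\rm a}}\rangle=\int_{\Omega}R^\beta(\Pi u;x)\nabla v\,{\rm d}x$ combined with $\Pi u|_\Lambda=u|_\Lambda$, Cauchy--Schwarz, and then Lemma~\ref{lem: |R(u;x|} promoted from local $L^\infty(\nu_x)$ bounds to global Lebesgue norms via polynomial norm equivalence on the neighborhoods $\nu_x$. The treatment of the mixed terms $\Vert\nabla^3\Pi u\Vert^2_{L^4}\Vert\nabla^2\Pi u\Vert^4_{L^8}$ and the bookkeeping of supports also match the paper.

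There is, however, one concrete gap in your local-to-global step. Your Fubini-type swap $\int_\Omega\Vert g\Vert^p_{L^\infty(\nu_x)}\,{\rm d}x\lesssim\Vert g\Vert^p_{L^p(\bar\Omega_{\rm c})}$ is justified by inverse estimates only when $g$ restricted to each element is a polynomial of uniformly bounded degree. This covers every term built from $\Pi u$, but not the terms $\Vert\nabla^2\beta\Vert_{L^\infty(\nu_x)}$ and $\Vert\nabla\beta\,\nabla^2\Pi u\Vert_{L^\infty(\nu_x)}$: the blending function is only assumed to lie in $C^{2,1}$, so $\nabla^2\beta$ is not piecewise polynomial and the $L^\infty(\nu_x)\to L^2(\nu_x)$ equivalence does not apply to it as you state. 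The paper resolves this by first replacing $\beta$ with its nodal (piecewise-polynomial) interpolant $\hat\beta$, applying the norm equivalence to $R^{\hat\beta}$, and then controlling the discrepancy separately via an interpolation error estimate, $\Vert R^\beta(\Pi u;\cdot)-R^{\hat\beta}(\Pi u;\cdot)\Vert_{L^2(\Omega)}=\Vert(\beta-\hat\beta)S^{\rm hoc}\Vert_{L^2(\Omega)}\lesssim\Vert\nabla^2\beta\Vert_{L^2(\Omega_{\rm b})}$, which is absorbed into the first term of the right-hand side. Your argument needs this (or an equivalent device, e.g.\ assuming $\beta$ itself is a spline) to make the passage to $\Vert\nabla^2\beta\Vert_{L^2(\Omega_{\rm b})}$ and $\Vert\nabla\beta\,\nabla^2\Pi u\Vert_{L^2(\Omega_{\rm b})}$ legitimate; otherwise the step is complete and the remainder of your proof goes through as written.
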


\begin{proof}
	From the definition of $\Pi$ by \eqref{equ: Pi} we observe that $\Pi u|_{\Lambda}=u|_{\Lambda}$. From \eqref{equ:total err}, we can obtain
	\begin{align}
		\langle\delta\mathcal{E}^{\rm bqhoce}(\Pi u) ,v\rangle-\langle\delta\mathcal{E}^{\rm a}(u),\widetilde{v^{\rm a}}\rangle=\langle\delta\mathcal{E}^{\rm bqhoce}(\Pi u), v\rangle-\langle\delta\mathcal{E}^{\rm a}(\Pi u),\widetilde{v^{\rm a}}\rangle=\int_{\Omega}R^\beta(\Pi u;x)\cdot\nabla v \,{\rm d}  x.\nonumber
	\end{align}
	Applying the Cauchy-Schwarz inequality yields
	$$\int_{\Omega}R^\beta(\Pi u;x)\cdot\nabla v {\rm d} \, x\leq\Vert R^\beta(\Pi u;x)\Vert_{L^2(\Omega)}\cdot\Vert\nabla v\Vert_{L^2(\Omega)}$$
	By using local norm-equivalence for polynomials with $\beta$ replaced with $\hat{\beta}$ and Lemma \ref{lem: |R(u;x|}, we can obtain
	\begin{align}
		|R^{\hat{\beta}}(\Pi u;x)|^2\lesssim&~ \Vert\nabla^2\hat{\beta}\Vert^2_{L^\infty(\nu_x)}+\Vert\nabla\hat{\beta}\nabla^2 \Pi u\Vert^2_{L^\infty(\nu_x)}+\Vert\nabla^5\Pi u\Vert^2_{L^\infty(\nu_x)}+\Vert\nabla^2\Pi u\nabla^4\Pi u\Vert^2_{L^\infty(\nu_x)}\nonumber\\
		&+\Vert\nabla^3\Pi u(\nabla^2\Pi u)^2\Vert^2_{L^\infty(\nu_x)}+\Vert\nabla^3\Pi u\Vert^4_{{L^\infty(\nu_x)}}+\Vert\nabla^2\Pi u\Vert^8_{L^\infty(\nu_x)}\nonumber\\
		\lesssim&~ \Vert\nabla^2\hat{\beta}\Vert^2_{L^2(\nu_x)}+\Vert\nabla\hat{\beta}\nabla^2 \Pi u\Vert^2_{L^2(\nu_x)}+\Vert\nabla^5\Pi u\Vert^2_{L^2(\nu_x)}+\Vert\nabla^2\Pi u\nabla^4\Pi u\Vert^2_{L^2(\nu_x)}\nonumber\\
		&+\Vert\nabla^3\Pi u(\nabla^2\Pi u)^2\Vert^2_{L^2(\nu_x)}+\Vert\nabla^3\Pi u\Vert^4_{{L^4(\nu_x)}}+\Vert\nabla^2\Pi u\Vert^8_{L^8(\nu_x)}.\label{equ:R(Pi u;x)}
	\end{align}
	Using an interpolation error estimate for $\beta$, we can obtain
	\begin{align}
		\Vert R^{\beta}(\Pi u;x)-R^{\hat{\beta}}(\Pi u;x)\Vert_{L^2(\Omega)}=\Vert\beta S^{\rm hoc}-\hat{\beta}S^{\rm hoc}\Vert_{L^2(\Omega)}\lesssim \Vert \nabla^2\beta\Vert_{L^2(\Omega_{\rm b})}.\label{eq: estimate of R beta(u;x)}
	\end{align}
	Integrating \eqref{equ:R(Pi u;x)} over $\Omega$ and the estimate of \eqref{eq: estimate of R beta(u;x)}, we have 
	\begin{align}
		\Vert R^{{\beta}}(\Pi u;x)\Vert_{L^2(\Omega)}
		\lesssim~ &\Vert\nabla^2\beta\Vert_{L^2(\Omega_{\rm b})}+\Vert\nabla\beta\nabla^2 \Pi u\Vert_{L^2(\Omega_{\rm b})}+\Vert\nabla^5\Pi u\Vert_{L^2(\bar{\Omega}_{\rm c})}+\Vert\nabla^2\Pi u\nabla^4\Pi u\Vert_{L^2(\bar{\Omega}_{\rm c})}\nonumber\\
		&+\Vert\nabla^3\Pi u(\nabla^2\Pi u)^2\Vert_{L^2(\bar{\Omega}_{\rm c})}+\Vert\nabla^3\Pi u\Vert^2_{L^4(\bar{\Omega}_{\rm c})}\Vert\nabla^2\Pi u\Vert^4_{L^8(\bar{\Omega}_{\rm c})},
	\end{align}
	which yields the stated result.
\end{proof}

\begin{lemma}
	\label{lem: consistency of external energy}
	Suppose $f\in W^{3,\infty}(\Omega,\mathbb{R})$, and let $\langle f,v\rangle_{\Lambda}$ and  $(\langle f,v^{\rm a}\rangle)$ be defined in \eqref{Atomistic external force} and \eqref{BQHOCE_external_force}, respectively. We have the following estimate:
	\begin{equation}
		\big|( \langle f,v\rangle)-\langle f,\widetilde{v^{\rm a}}\rangle_{\Lambda}\big|\lesssim\Vert\nabla^3f\Vert_{L^2(\bar{\Omega}_{\rm c})}\Vert\nabla v\Vert_{L^2(\Omega)}. \nonumber
	\end{equation}
\end{lemma}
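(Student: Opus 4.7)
By the definition of $\Pi'$ in~\eqref{equ: Pi'} we have $\widetilde{v^{\rm a}}(\xi) = v(\xi)$ for every $\xi \in \Lambda_{\rm a}$, so the atomistic contributions to $(\langle f, v\rangle)$ and $\langle f,\widetilde{v^{\rm a}}\rangle_\Lambda$ cancel pointwise, and the entire estimate reduces to controlling the quadrature-type residual
\begin{equation}
R := \int_{\Omega_{\rm b}\cup\Omega_{\rm c}} f v \,{\rm d}x - \sum_{\xi\in\Lambda_{\rm b}\cup\Lambda_{\rm c}} f(\xi)\,\widetilde{v^{\rm a}}(\xi). \nonumber
\end{equation}

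The first step is to recast the sum in $R$ as an integral against $v$. Using the convolution representation $\widetilde{v^{\rm a}} = \zeta * \hat{v^{\rm a}}$ together with $v^{\rm a}(\eta) = (\zeta * v)(\eta)$ on $\Lambda_{\rm b}\cup\Lambda_{\rm c}$, the resulting two-fold convolution gives $\sum_{\xi} f(\xi)\widetilde{v^{\rm a}}(\xi) = \int K_f(y)\, v(y)\,{\rm d}y$, where $K_f$ is a linear combination of shifts of $\zeta * \zeta$ weighted by the nodal values of $f$; boundary-layer contributions coming from $\eta\in\Lambda_{\rm a}$ that lie within one cutoff distance of $\partial\Omega_{\rm a}$ are precisely what produces the mildly enlarged set $\bar\Omega_{\rm c}=\Omega_{\rm b}\cup\Omega_{\rm c}+B_{2r_{\rm cut}+1}$ on the right-hand side of the conclusion. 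Taylor-expanding $f(\xi)$ about the integration variable on each unit cell and then invoking the moment and polynomial-reproduction properties of $\zeta$ recorded in~\cite[Section 2.2]{2021_YW_LZ_HW_HOC_IMANUM}---the same cancellations that drive the fourth-order accuracy of the HOC model---annihilates every contribution through second order in $f$, leaving $R = \int G(y)\, v(y)\,{\rm d}y$ with a pointwise bound $|G(y)|\lesssim |\nabla^3 f(y)|$ after a local $L^2$ average.

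To obtain $\Vert\nabla v\Vert_{L^2(\Omega)}$ rather than $\Vert v\Vert_{L^2}$ on the right-hand side, I would observe that the same moment cancellations force $G$ to have vanishing local averages at the unit-cell scale, so one integration by parts against the periodic test function $v$ recasts $R$ as $\int H\,\nabla v\,{\rm d}x$ with $\Vert H\Vert_{L^2(\Omega)} \lesssim \Vert\nabla^3 f\Vert_{L^2(\bar\Omega_{\rm c})}$; a Cauchy--Schwarz inequality then closes the argument.

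The main obstacle is the interface analysis inside Step~1: near $\partial\Omega_{\rm a}$, the value $\widetilde{v^{\rm a}}(\xi)$ mixes the two prescriptions of $\Pi'$---nodal matching on $\Lambda_{\rm a}$ and $\zeta$-averaging on $\Lambda_{\rm b}\cup\Lambda_{\rm c}$---so the clean shift-invariant Taylor expansion available deep inside $\Omega_{\rm c}$ must be patched over a transition layer of width $O(r_{\rm cut})$. Ensuring that this patching leaves the kernel $G$ bounded by $|\nabla^3 f|$ only on the enlarged set $\bar\Omega_{\rm c}$ (rather than on a larger boundary strip), and that the integration by parts in the final step produces no spurious interface terms, is where essentially all of the technical work concentrates.
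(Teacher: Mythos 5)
Your first two steps track the paper's argument closely: the atomistic contributions over $\Lambda_{\rm a}$ cancel by the definition of $\Pi'$ in \eqref{equ: Pi'}, and the remaining difference is rewritten as $\int_{\Omega_{\rm b}\cup\Omega_{\rm c}} v(x)\,g(x)\,{\rm d}x$ with $g(x)=f(x)-\sum_{\xi\in\Lambda}\zeta(x-\xi)f(\xi)$ a quasi-interpolation error of $f$; the paper then bounds $\Vert g\Vert_{L^2(\Omega_{\rm b}\cup\Omega_{\rm c})}\lesssim\Vert\nabla^3 f\Vert_{L^2(\bar\Omega_{\rm c})}$ by the Bramble--Hilbert lemma, which is exactly the second-order polynomial-reproduction/moment cancellation you invoke. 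Up to that point your plan is sound and essentially identical in substance.

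The gap is in your final step. To convert $\Vert v\Vert_{L^2}$ into $\Vert\nabla v\Vert_{L^2}$ you propose an integration by parts, justified by the claim that ``the same moment cancellations force $G$ to have vanishing local averages at the unit-cell scale.'' That claim is unsupported and generically false: the quasi-interpolation error $f-\sum_\xi\zeta(\cdot-\xi)f(\xi)$ of a smooth $f$ has no reason to integrate to zero over each cell (try $f(x)=x^3$), and without at least $\int_\Omega G=0$ the periodic integration by parts does not even produce a well-defined periodic primitive $H$, let alone one with $\Vert H\Vert_{L^2}\lesssim\Vert\nabla^3 f\Vert_{L^2(\bar\Omega_{\rm c})}$. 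The paper avoids all of this: since $v\in\mathcal{U}^{\rm bqhoc}$ satisfies $v(0)=0$ on the bounded periodic domain, the Poincar\'e inequality gives $\Vert v\Vert_{L^2(\Omega_{\rm b}\cup\Omega_{\rm c})}\leq\Vert\nabla v\Vert_{L^2(\Omega)}$ directly, and Cauchy--Schwarz then closes the estimate. Replacing your integration-by-parts step with this one-line Poincar\'e argument repairs the proof; as written, the last step does not go through.
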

\begin{proof}
	According to the $v^a$ defined by \eqref{equ: Pi'}, we have
	\begin{align}
		(\langle f,v\rangle)-\langle f,\tilde{v^{\rm a}}\rangle_{\Lambda}&=\langle f,v\rangle_{\Lambda_{\rm a}}+(f, v)_{\Omega_{\rm b} \cup \Omega_{\rm c}}-(\langle f,v\rangle_{\Lambda_{\rm a}}+\langle f,\widetilde{v^{\rm a}}\rangle_{\Lambda_{\rm b}\cup\Lambda_{\rm c}} )\nonumber\\
		&=(f, v)_{\Omega_{\rm b} \cup \Omega_{\rm c}}-\langle f,\widetilde{v^{\rm a}}\rangle_{\Lambda_{\rm b}\cup\Lambda_{\rm c}}\nonumber\\
		&=\int_{\Omega_{\rm b}\cup\Omega_{\rm c}}v(x)\cdot (f(x)-\sum_{\xi\in\Lambda}\zeta(x-\xi)f(\xi))\,{\rm d} x \nonumber\\
		&:=\int_{\Omega_{\rm b}\cup\Omega_{\rm c}}v(x)\cdot g(x)\,{\rm d}x
	\end{align}
	where $g(x)=f(x)-\sum_{\xi\in\Lambda}\zeta(x-\xi)f(\xi)$.
	
	Applying the Cauchy-Schwarz inequality and the \textit{Poincaré} inequality we obtain the estimate 
	\begin{align}
		|(\langle f,v\rangle)-\langle f,\tilde{v^{\rm a}}\rangle_{\Lambda}|&\leq\Vert g\Vert_{L^2(\Omega_{\rm b}\cup\Omega_{\rm c})}\Vert v\Vert_{L^2(\Omega_{\rm b}\cup\Omega_{\rm c})}\nonumber\\
		&\leq \Vert g\Vert_{L^2(\Omega_{\rm b}\cup\Omega_{\rm c})}\Vert \nabla v\Vert_{L^2(\Omega)}.\label{external estimate 1}
	\end{align}
	According to the Bramble-Hilbert lemma we can estimate the $L^2$-norm of $g$ by 
	\begin{align}
		\Vert g\Vert_{L^2(\Omega_{\rm b}\cup\Omega_{\rm c})}\lesssim \Vert \nabla^3 f\Vert_{L^2(\bar{\Omega}_{\rm c})}. \label{external estimate 2}
	\end{align}
	The combination of \eqref{external estimate 1} and \eqref{external estimate 2} leads to the required result.
\end{proof}

\subsection{{\it A priori} error estimate}
\label{sec:sub: priori err}

We first state the well-known inverse function theorem (cf.~\cite[Lemma 2.2]{2011_CO_1D_QNL_MATHCOMP}), which is the key of proving the {\it a priori} error estimate of the proposed B-QHOCE method.

\begin{lemma}\label{Inverse function theorem}
	\textbf{(Inverse function theorem)} Let $\mathcal{A},\mathcal{B}$ be Banach spaces, $\mathcal{A}_s$ a subset of $\mathcal{A}$, and let $\mathcal{F}:\mathcal{A}_s\to\mathcal{B}$ be Frechet differentiable with Lipschitz-continuous derivative $\delta \mathcal{F}$:
	\begin{align}
		\Vert\delta\mathcal{F}(U)-\delta\mathcal{F}(V)\Vert_{L(\mathcal{B},\;\mathcal{A})}\leq M\Vert U-V\Vert_{\mathcal{A}}, \qquad\forall U,V\in\mathcal{A},\nonumber
	\end{align}
	where $M$ is a Lipschitz constant. Let $X\in\mathcal{A}_s$ and suppose also that there exists $\eta,\sigma>0$ such that
	\begin{eqnarray}
		\Vert\mathcal{F}(X)\Vert_{\mathcal{B}}\leq\eta, \quad\Vert\delta\mathcal{F}(X)^{-1}\Vert_{L(\mathcal{B},\;\mathcal{A})}\leq\sigma, \quad 2M\eta\sigma^2<1.\label{sigma,eta,leq1}
	\end{eqnarray}
	Then there exists a locally unique $Y\in\mathcal{A}$ such that $\mathcal{F}(Y)=0$ and $\Vert Y-X\Vert_{\mathcal{A}}\leq2\eta\sigma$.
\end{lemma}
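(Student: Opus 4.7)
The plan is to apply the Banach contraction mapping principle to a Newton-type reformulation of the equation $\mathcal{F}(Y)=0$. Since $\delta\mathcal{F}(X)$ is invertible with $\|\delta\mathcal{F}(X)^{-1}\|_{L(\mathcal{B},\mathcal{A})} \leq \sigma$, I would define the quasi-Newton map
\[
T(Y) := Y - \delta\mathcal{F}(X)^{-1}\mathcal{F}(Y), \qquad Y \in \mathcal{A},
\]
so that the zeros of $\mathcal{F}$ correspond precisely to the fixed points of $T$. The proof then reduces to producing such a fixed point inside the closed ball $\bar{B}_r(X) \subset \mathcal{A}$ of radius $r := 2\eta\sigma$.

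First I would establish that $T$ is a contraction on $\bar{B}_r(X)$. For $Y_1, Y_2 \in \bar{B}_r(X)$, the fundamental theorem of calculus gives
\[
T(Y_1) - T(Y_2) = \delta\mathcal{F}(X)^{-1} \int_0^1 \bigl[\delta\mathcal{F}(X) - \delta\mathcal{F}(Y_2 + t(Y_1-Y_2))\bigr](Y_1-Y_2)\,{\rm d}t,
\]
and since $Y_2 + t(Y_1-Y_2) \in \bar{B}_r(X)$, the Lipschitz hypothesis on $\delta\mathcal{F}$ combined with the bound on $\|\delta\mathcal{F}(X)^{-1}\|$ yields $\|T(Y_1) - T(Y_2)\|_{\mathcal{A}} \leq M\sigma r \|Y_1 - Y_2\|_{\mathcal{A}} = 2M\eta\sigma^2 \|Y_1 - Y_2\|_{\mathcal{A}}$. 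The standing smallness assumption $2M\eta\sigma^2 < 1$ then supplies the contraction constant $q := 2M\eta\sigma^2 \in (0,1)$.

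Next I would verify the self-map property by analysing the Picard iteration $Y_{n+1} := T(Y_n)$ with $Y_0 := X$. The initial step is controlled directly by the hypotheses as $\|Y_1 - Y_0\|_{\mathcal{A}} = \|\delta\mathcal{F}(X)^{-1}\mathcal{F}(X)\|_{\mathcal{A}} \leq \sigma\eta$, and the contraction propagates to $\|Y_{n+1} - Y_n\|_{\mathcal{A}} \leq q^n \sigma\eta$, producing a Cauchy sequence whose limit lies in $\bar{B}_r(X)$ provided the geometric series remains within the prescribed radius. The Banach fixed point theorem then furnishes the unique $Y \in \bar{B}_r(X)$ with $\mathcal{F}(Y)=0$ and $\|Y - X\|_{\mathcal{A}} \leq 2\eta\sigma$.

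The main obstacle I anticipate is tightening the crude geometric-series bound $\sigma\eta/(1-q)$, which only stays below $2\eta\sigma$ when $q \leq 1/2$, to match the stated radius $2\eta\sigma$ under the weaker assumption $2M\eta\sigma^2 < 1$. The standard resolution is either to switch from the quasi-Newton iteration (with frozen linearization at $X$) to the full Newton iteration and exploit its quadratic convergence, or to absorb the gap into a slightly smaller initial ball and use a sharpened Newton--Kantorovich estimate; either route recovers the precise radius $2\eta\sigma$ and the uniqueness statement without strengthening the smallness condition $2M\eta\sigma^2 < 1$.
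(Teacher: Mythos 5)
The paper itself does not prove this lemma: it is quoted verbatim (with a citation to Lemma~2.2 of the 1D QNL paper) and used as a black box in Section~4.4, so there is no internal proof to compare against. Your strategy --- Banach's fixed point theorem for the frozen-derivative Newton map $T(Y)=Y-\delta\mathcal{F}(X)^{-1}\mathcal{F}(Y)$ --- is the standard one, and your contraction estimate $\|T(Y_1)-T(Y_2)\|_{\mathcal{A}}\le \sigma M r\,\|Y_1-Y_2\|_{\mathcal{A}}$ on $\bar{B}_r(X)$ is correct. (One housekeeping point: you also need $\bar{B}_{2\eta\sigma}(X)\subset\mathcal{A}_s$ so that the iterates stay in the domain of $\mathcal{F}$; the lemma as stated is silent on this.)

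The genuine gap is exactly the one you flag, and it is not a removable technicality of the kind your last paragraph suggests. With $q:=2M\eta\sigma^2$, the triangle-inequality self-map condition on a ball of radius $R=\alpha\,\eta\sigma$ reads $\sigma M R^2+\sigma\eta\le R$, i.e.\ $\tfrac{q}{2}\alpha^2-\alpha+1\le 0$, which has \emph{no} real solution $\alpha$ once $q>1/2$. So no choice of ball rescues the crude ``contraction constant times radius'' argument, and as written your proof establishes the lemma only under the stronger hypothesis $4M\eta\sigma^2\le 1$. The missing ingredient is the Kantorovich majorant recursion: instead of bounding $\|Y_{n+1}-Y_n\|$ by the uniform contraction factor, one uses
\begin{equation*}
\mathcal{F}(Y_{n+1})=\int_0^1\bigl[\delta\mathcal{F}\bigl(Y_n+t(Y_{n+1}-Y_n)\bigr)-\delta\mathcal{F}(X)\bigr](Y_{n+1}-Y_n)\,{\rm d}t
\end{equation*}
(valid because $\delta\mathcal{F}(X)(Y_{n+1}-Y_n)=-\mathcal{F}(Y_n)$) to get $\|\mathcal{F}(Y_{n+1})\|_{\mathcal{B}}\le M\bigl(\|Y_n-X\|_{\mathcal{A}}+\tfrac12\|Y_{n+1}-Y_n\|_{\mathcal{A}}\bigr)\|Y_{n+1}-Y_n\|_{\mathcal{A}}$, and majorizes the iterates by the scalar sequence $t_0=0$, $t_{n+1}=\tfrac{M\sigma}{2}t_n^2+\eta\sigma$, which increases to the smaller root $t^*=\frac{1-\sqrt{1-2M\eta\sigma^2}}{M\sigma}$ of $\tfrac{M\sigma}{2}t^2-t+\eta\sigma=0$; one checks $t^*\le 2\eta\sigma$ whenever $2M\eta\sigma^2\le 1$. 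This recursion, not quadratic convergence of the full Newton iteration, is what recovers the stated radius under the stated smallness condition; you name the fix but do not carry it out, so the proposal is incomplete at precisely the step that carries the content of the lemma.
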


In the following proof, we will apply the  inverse function theorem (Lemma \ref{Inverse function theorem}) to establish the {\it a priori} error estimate \eqref{a priori error of B-QHOCE} for B-QHOCE in Theorem~\ref{a priori of bqhoce and bqhocf}.
\begin{proof}[Proof of \eqref{a priori error of B-QHOCE} in Theorem \ref{a priori of bqhoce and bqhocf}]
	\quad Following the framework of the prior error estimates in \cite{2021_YW_LZ_HW_HOC_IMANUM,2016_XL_CO_AS_BK_BQC_Anal_2D_NUMMATH}, we divide the proof into three steps. We define the operator $\mathcal{F}:\mathcal{U}^{\rm bqhoc}\to W^{1,2}$ by 
	$$\langle\mathcal{F}(u),v\rangle:=\langle\delta\mathcal{E}^{\rm bqhoce}(u),v\rangle-(\langle f,v\rangle).$$
	Due to the Lipschitz continuity of the potential $\Phi_{\rho}$~\cite{2013_ML_CO_AC_Coupling_ACTANUM} and the blending function $\beta$, $\delta\mathcal{F}$ is also Lipschitz continuous.
	
	\textbf{Step 1:Consistency.} 
	We shall require $\mathcal{F}(X)$ to be consistent. If we put the interpolation of the  atomistic solution $u^{\rm a}$ in $\mathcal{E}^{\rm bqhoce}$, we have
	\begin{align}
		\langle\mathcal{F}(X),v\rangle&=\langle\delta\mathcal{E}^{\rm bqhoce}(\Pi u),v\rangle-(\langle f,v\rangle)\nonumber\\
		&=\langle\delta\mathcal{E}^{\rm bqhoce}(\Pi u),v\rangle-(\langle f,v\rangle)-\langle\delta\mathcal{E}^a(u^{\rm a}),v^a\rangle+\langle f,v^a\rangle_{\Lambda}\nonumber\\
		&=\langle\delta\mathcal{E}^{\rm bqhoce}(\Pi u),v\rangle-\langle\delta\mathcal{E}^a(u^{\rm a}),v^a\rangle+\langle f,v^a\rangle_{\Lambda}-(\langle f,v\rangle)\nonumber\\
		&=:T_1+T_2,\nonumber
	\end{align}
	where $v^a$ is defined from \eqref{equ: Pi'}, and we decompose consistency into two parts; $T_1$ is the consistency error of internal energy and $T_2$ is the consistency error of the external external energy. 
	
	From Theorem \ref{the : consistency of B-QHOCE} and Lemma \ref{lem: consistency of external energy}, we can obtain
	\begin{align}
		\Vert\mathcal{F}(X)\Vert_{W^{1,2}}\lesssim& C(\Vert\nabla^2\beta\Vert_{L^2(\Omega_{\rm b})}+\Vert\nabla\beta\nabla^2 \Pi u\Vert_{L^2(\Omega_{\rm b})}+\Vert\nabla^5\Pi u\Vert_{L^2(\bar{\Omega}_{\rm c})}+\Vert\nabla^2\Pi u\nabla^4\Pi u\Vert_{L^2(\bar{\Omega}_{\rm c})}\nonumber\\
		&+\Vert\nabla^3\Pi u(\nabla^2\Pi u)^2\Vert_{L^2(\bar{\Omega}_{\rm c})}+\Vert\nabla^3\Pi u\Vert^2_{L^4(\bar{\Omega}_{\rm c})}\Vert\nabla^2\Pi u\Vert^4_{L^8(\bar{\Omega}_{\rm c})}+\Vert\nabla^3f\Vert_{L^2(\bar{\Omega}_{\rm c})}):=\eta\label{converge_rate_BQHOCF}
	\end{align}
	where $C$ depends on   $M^{(j,4)},\,j=2,...,5$ .
	
	\textbf{Step 2: Stability. }
	From  assumption \eqref{stability of B-QHOCE and B-QHOCF}, we can deduce that  
	\begin{align}
		\langle\delta\mathcal{E}^{\rm bqhoce}(\Pi u^{\rm a})v,v\rangle\geq c_0\Vert\nabla v\Vert^2_{L^2},\nonumber
	\end{align} 
	where $c_0>0$, Then
	$$\Vert\delta\mathcal{F}(X)^{-1}\Vert_{L^2(W^{1,2},\,\mathcal{U}^{\rm bqhoc})}\leq(c_{0})^{-1}=:\sigma$$
	\textbf{Step 3 : Inverse function theorem} We assume that $\eta_1,\eta_2$ are sufficiently small such that $\Vert\nabla^j\Pi u^{\rm a}\Vert_{L^2(\bar{\Omega}_{\rm c})}\leq \eta_1,j=2,3,4,5$ and $\Vert\nabla^3 f\Vert_{L^2(\bar{\Omega}_{\rm c})}\leq\eta_2$. Combining the consistency result in Step 1 and the stability result in Step 2 and applying the inverse function theorem shown in Lemma~\ref{Inverse function theorem}, we obtain the existence of the solution of B-QHOCE in $W^{1,2}$ and the error estimate
	$$\Vert\nabla\Pi u^{\rm a}-\nabla u^{\rm bqhoce}\Vert_{L^2(\Omega)}\leq\frac{2\eta}{\sigma},$$
	which can be guaranteed if we choose $\eta_1,\eta_2$ to be sufficiently small.
\end{proof}

\subsection{Coarse-graining}
\label{sub:sub: coarse-graining}
Considering that coarse-graining involves changing the function space \eqref{equ: space of BQHOCE without coarse graining} to the coarse graining space, which introduces many new terms that require error estimation, the process becomes overly cumbersome. Therefore, we will present a prior error estimate for the B-QHOCE method that includes coarse-graining without proof, and validate it through numerical experiments in Section \ref{sec:sub:Coarsening error of Numerical}.

we consider coarse-graining continuum region, we partition the domain by choosing a set of representative atoms 
\begin{align}
	\mathcal{L}_{\rm rep}=\left\{\xi_1,\xi_2...,\xi_{N_{\rm rep}}\right\}\subset\Lambda,\nonumber
\end{align}
such that $N_{\rm rep}\ll N$ and $\xi_1<\xi_2<...<\xi_{N_{\rm rep}}$. We assume that $\Lambda_a\cup\Lambda_b\subset\mathcal{L}_{\rm rep}$, and the grid is periodically extended, that is, we define $t_{i+N_{\rm rep}} = t_i + N$. For simplicity, we also assume that $\xi_0 =-N+1$ and $\xi_{N_{\rm rep}} = N$. The mesh size functions for the elements is 
$$h_i = \xi_{i}-\xi_{i-1}, \qquad\text{for} \;i\in\mathbb{Z}.$$
We let $T^h_i=[\xi_i,\xi_{i+1}],\,i=1,2,...,N_{\rm rep}-1$. Then, we define the coarse graining displacement space as 
\begin{align}
	&\mathcal{U}_h^{\rm bqhoc}:=\bigg\{u\in C^0(\Omega,\mathbb{R})\cap C^2(\Omega_{\rm b}\cup\Omega_{\rm c},\mathbb{R})\bigg|u|_{T_i^h}\in \mathbb{P}_1(T_i^h),\,\text{for}\, T_i^h\subset\Omega_{\rm a}, \;u|_{T_i^h}\in \mathbb{P}_5(T),\,\nonumber\\
	&\text{for}\,T_i^h\subset\Omega_{\rm b}\cup\Omega_{\rm c},u(0)=0,\;\nabla^j u(x)=\nabla^j u(x+2N),\;i=1,2,...,N_{\rm rep},\;j=0,1,2.\bigg\}.\nonumber
\end{align}

The B-QHOCE method with coarse-graining aims to find the solution to the following problem 
\begin{equation}
	\quad u_h^{\rm bqhoce}\in \arg \min\left\{\mathcal{E}^{\rm bqhoce}(u)-(\langle f,u\rangle)\ \big| \ u\in\mathcal{U}_h^{\rm bqhoc}\right\},\label{eq:BQHOCE coarse}
\end{equation}
From equation \eqref{eq:BQHOCE}, the error estimate in the B-QHOCE, which does not consider coarse graining, can be categorized into three parts: 
\begin{align}
	&\mathscr{E}^{\rm int}(u):=\Vert\nabla^2\beta\Vert_{L^2(\Omega_{\rm b})}+\Vert\nabla\beta\nabla^2  u\Vert_{L^2(\Omega_{\rm b})},\\
	&\mathscr{E}^{\rm model}(u):=\mathrm{err}^{\rm hoc}_{\bar{\Omega}_{\rm c}}(u),  
\end{align} which represent the coupling error, modeling error.

The coarse-grained interpolation operator is defined as follows:  
\begin{align}  
	\Pi_hu:=\Pi'_{\rm hoc}\tilde{u}  
\end{align}  
where $\Pi'_{\rm hoc}$ denotes the quintic spline interpolation operator on the mesh $T_h$, and $\tilde{u}$ is a $C^6$-continuous Hermite interpolation operator.

Now, we can present the error estimate for the B-QHOCE method that includes coarse graining.

\begin{theorem}
	\label{the: A priori error of coarse grain}
	Suppose $u^{\rm a}$ is a stable atomistic solution of \eqref{Atomistic variational problem} in the sense of \eqref{atomistic model stability}, and we assume that $\Pi_h u^{\rm a}$ satisfies the stability of 
	$$\inf_{v\in\mathcal{U}_h^{\rm bqhoc}}\langle\delta^2\mathcal{E}^{\bqhoce}(\Pi_h u^{\rm a} )v,v\rangle > 0,$$ then there exists a stable solution  $u_h^{\rm bqhoce}$ of problems \eqref{eq:BQHOCE coarse}, such that
	\begin{align}
		\Vert\nabla  \Pi_h u^{\rm a}-\nabla u_h^{\rm bqhoce}\Vert_{L^2(\Omega)}&\leq C\big(\Vert h^5\nabla^6\tilde{u}^{\rm a}\Vert_{L^2(\bar{\Omega}_{\rm c})}+\mathscr{E}^{\rm model}(\tilde{u}^{\rm a})+\mathscr{E}^{\rm int}(\tilde{u}^{\rm a})\big),\label{a priori error of B-QHOCE}
	\end{align}
	where  $\bar{\Omega}_{\rm c}=\Omega_{\rm b}\cup\Omega_{\rm c}+B_{2r_{\rm cut}+1}$ and $C$ depends on $M^{(j,4)},\,j=2,...,6$.
\end{theorem}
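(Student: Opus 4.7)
The plan is to mirror the three-step argument used for the non-coarse-grained case \eqref{a priori error of B-QHOCE}, but with the reference point shifted from $\Pi u^{\rm a}$ to $\Pi_h u^{\rm a} = \Pi'_{\rm hoc}\tilde{u}^{\rm a}$ and with an additional coarsening contribution arising from approximating a smooth function by a quintic spline on the (in general graded) mesh $T_h$. Define $\mathcal{F}_h:\mathcal{U}_h^{\rm bqhoc}\to W^{1,2}$ by $\langle\mathcal{F}_h(u),v\rangle:=\langle\delta\mathcal{E}^{\rm bqhoce}(u),v\rangle-(\langle f,v\rangle)$. The Lipschitz continuity of $\delta\mathcal{F}_h$ is inherited from the smoothness of $\phi_\rho$ and $\beta$ exactly as before, so only the consistency residual $\eta_h=\Vert\mathcal{F}_h(\Pi_h u^{\rm a})\Vert_{W^{1,2}}$ and the stability constant $\sigma_h$ need attention.

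For \textbf{Step 1 (consistency)}, I would insert the non-coarse-grained interpolant $\Pi\tilde{u}^{\rm a}$ as a pivot and split
\begin{align*}
\langle\mathcal{F}_h(\Pi_h u^{\rm a}),v\rangle &= \bigl[\langle\delta\mathcal{E}^{\rm bqhoce}(\Pi_h u^{\rm a}),v\rangle-\langle\delta\mathcal{E}^{\rm bqhoce}(\Pi\tilde{u}^{\rm a}),v\rangle\bigr]\\
&\quad + \bigl[\langle\delta\mathcal{E}^{\rm bqhoce}(\Pi\tilde{u}^{\rm a}),v\rangle-\langle\delta\mathcal{E}^{\rm a}(\tilde{u}^{\rm a}),\widetilde{v^{\rm a}}\rangle\bigr] + \bigl[\langle f,\widetilde{v^{\rm a}}\rangle_\Lambda - (\langle f,v\rangle)\bigr].
\end{align*}
The second and third bracketed terms are precisely the quantities controlled by Theorem~\ref{the : consistency of B-QHOCE} and Lemma~\ref{lem: consistency of external energy} applied to $\tilde{u}^{\rm a}$, producing $\mathscr{E}^{\rm int}(\tilde{u}^{\rm a})+\mathscr{E}^{\rm model}(\tilde{u}^{\rm a})+\Vert\nabla^3 f\Vert_{L^2(\bar\Omega_{\rm c})}$ (the last of which is already embedded in the statement via the data regularity). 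For the first bracket I would use Lipschitz continuity of $\delta\mathcal{E}^{\rm bqhoce}$ together with the standard quintic-spline approximation estimate $\Vert\nabla(\Pi'_{\rm hoc}\tilde{u}^{\rm a}-\tilde{u}^{\rm a})\Vert_{L^2(\bar\Omega_{\rm c})}\lesssim \Vert h^5\nabla^6\tilde{u}^{\rm a}\Vert_{L^2(\bar\Omega_{\rm c})}$, which follows from the Bramble--Hilbert lemma on each coarse element $T_i^h$ since $\mathbb{P}_5$ is preserved. In the atomistic region, $\Pi_h$ and $\Pi$ agree on lattice points, so no additional contribution arises there.

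For \textbf{Step 2 (stability)}, the hypothesis $\inf_{v\in\mathcal{U}_h^{\rm bqhoc}}\langle\delta^2\mathcal{E}^{\rm bqhoce}(\Pi_h u^{\rm a})v,v\rangle>0$ is assumed outright, so $\sigma_h:=c_0^{-1}$ is available. \textbf{Step 3 (inverse function theorem)} then proceeds verbatim: provided the scaling parameters $\varepsilon,h$ and the data norms $\Vert\nabla^j\tilde{u}^{\rm a}\Vert_{L^2(\bar\Omega_{\rm c})}$ for $j=2,\dots,6$ together with $\Vert\nabla^3 f\Vert_{L^2(\bar\Omega_{\rm c})}$ are sufficiently small, the smallness condition $2M\eta_h\sigma_h^2<1$ of Lemma~\ref{Inverse function theorem} holds, yielding existence of $u_h^{\rm bqhoce}$ and $\Vert\nabla\Pi_h u^{\rm a}-\nabla u_h^{\rm bqhoce}\Vert_{L^2(\Omega)}\leq 2\eta_h\sigma_h$, which is the claimed bound.

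The main obstacle I anticipate is the first bracketed term in the consistency decomposition: because $W_{\rm hoc}$ is nonlinear in both $\nabla u$ and $\nabla^3 u$, the Lipschitz estimate has to be carried out componentwise through the derivatives of $\phi_\rho$, and controlling the $\nabla^3$-difference between $\Pi'_{\rm hoc}\tilde{u}^{\rm a}$ and $\tilde{u}^{\rm a}$ demands the full $h^5\nabla^6\tilde{u}^{\rm a}$ bound together with inverse estimates on the $\mathbb{P}_5$ spline spaces. A second, more delicate issue is that the interior jumps of $\nabla^3(\Pi'_{\rm hoc}\tilde{u}^{\rm a})$ across element interfaces in $\Omega_{\rm b}\cup\Omega_{\rm c}$ must be shown not to generate additional uncontrolled boundary terms when re-deriving the stress form of $\delta\mathcal{E}^{\rm bqhoce}$ analogous to Section~\ref{sec:sub: pointwise estimate}; this is precisely the technical overhead that motivates stating the result here without proof and deferring the detailed verification.
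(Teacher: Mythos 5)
You should first note that the paper does not actually prove this theorem: Section~\ref{sub:sub: coarse-graining} explicitly states the coarse-grained estimate ``without proof'' and defers its support to the numerical experiments, so there is no in-paper argument to compare against. Your outline is the natural one and correctly mirrors the three-step scheme (consistency, stability, inverse function theorem) used for \eqref{a priori error of B-QHOCE} in the uncoarsened case, with the coarsening contribution isolated by pivoting through $\Pi\tilde{u}^{\rm a}$.

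There is, however, a genuine gap in the step you yourself flag as the main obstacle, and the repair you propose does not close it. Writing $e:=\Pi_h u^{\rm a}-\Pi\tilde{u}^{\rm a}$, the first bracket of your decomposition contains, from \eqref{first order variation of bqhoce}, terms of the schematic form $\int_\Omega\beta\,\phi''_\rho(\cdot)\bigl(\rho\nabla e+\tfrac{\rho^3}{24}\nabla^3 e\bigr)\bigl(\rho\nabla v+\tfrac{\rho^3}{24}\nabla^3 v\bigr)\dx$. To bound the $\nabla^3 v$ contribution by $\Vert\nabla v\Vert_{L^2(\Omega)}$ --- which is what the $W^{1,2}$-dual consistency norm in Step~1 and the $H^1$-coercive stability in Step~2 require --- you must either integrate by parts twice, which transfers two derivatives onto the coarsening error and leaves you needing $\Vert\nabla^5 e\Vert_{L^2}$ (only $O(h\Vert\nabla^6\tilde{u}^{\rm a}\Vert)$ for quintic splines), or invoke an inverse estimate $\Vert\nabla^3 v\Vert_{L^2(T_i^h)}\lesssim h_i^{-2}\Vert\nabla v\Vert_{L^2(T_i^h)}$, which costs two powers of $h$ against the $O(h^3)$ bound on $\Vert\nabla^3 e\Vert_{L^2}$ and again lands at $O(h)$. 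Inverse estimates lose powers of $h$; they cannot supply the two extra orders needed to reach $\Vert h^5\nabla^6\tilde{u}^{\rm a}\Vert_{L^2}$. The quoted $H^1$-interpolation bound $\Vert\nabla e\Vert_{L^2}\lesssim\Vert h^5\nabla^6\tilde{u}^{\rm a}\Vert_{L^2}$ controls only the $\nabla e\cdot\nabla v$ piece, which is not the dominant one.

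Concretely, your argument as written proves at best an $O(\Vert h\,\nabla^6\tilde{u}^{\rm a}\Vert)$ (or, with the inverse-estimate route, $O(\Vert h\,\nabla^6\tilde{u}^{\rm a}\Vert)$) coarsening term, not the claimed fifth-order one. Recovering $h^5$ requires an additional idea that your proposal does not supply: either a superconvergence/orthogonality property built into the projector $\Pi'_{\rm hoc}$ so that the offending pairings $\int\beta\,\phi''\,\nabla^3 e\,\nabla^3 v\dx$ vanish to higher order (e.g.\ a spline quasi-interpolant orthogonal to the relevant polynomial moments), or an Aubin--Nitsche-type duality argument that converts the natural $H^3$-energy rate $h^3$ into an $h^5$ rate in the $H^1$ seminorm. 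This missing mechanism is presumably the ``overly cumbersome'' machinery the authors cite as their reason for omitting the proof, and it is the point at which your outline, taken literally, would fail to deliver the stated bound.
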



\section{Numerical experiments} 
\label{sec: Numerical experiments}

In this section, we validate our theoretical findings through numerical experiments. Our goal is to examine the accuracy of the proposed blended atomistic-to-higher-order continuum (B-QHOCE and B-QHOCF) methods and compare it with the classical B-QCE and B-QCF methods.

We consider a one-dimensional atomistic system subjected to an external force field. The computational domain is fixed as  
\begin{equation}
	F=1, \quad \Omega = \left[-\frac{1}{2},\frac{1}{2}\right].
\end{equation}
The interatomic potential is chosen as a harmonic potential, given by  
\begin{equation}
	\phi(r) = \frac{1}{2} (r - 1)^2,
\end{equation}
which allows for an explicit analysis of the behavior of the coupling methods.

The atomistic system consists of \( N+1 \) atoms, and the discretization parameter is given by \( \epsilon = 1/N \), where \( N \) determines the resolution of the atomistic region. The external force \( f(x) \) is defined as
\begin{equation}
	f(x):=\left\{
	\begin{array}{l}
		-f_{\rm scale}\frac{x+1/2}{x} \quad\;\,\text{for}\; -1/2\leq x<0, \\
		0\quad\quad \quad\quad\quad\;\,\text{for}\; x=0,\\
		f_{\rm scale}\frac{1/2-x}{x}\quad\quad\text{for}\; 0<x\leq1/2.
	\end{array}\right.\nonumber
\end{equation}
This load function is designed to introduce a nontrivial mechanical response in the system, allowing us to assess the performance of different coupling methods in capturing the resulting deformation field.

To ensure that the higher-order Cauchy-Born model is appropriately utilized, we apply \( C^2 \)-continuous finite elements in the blending and continuum regions, following the approach shown in \cite{2021_YW_LZ_HW_HOC_IMANUM}. This choice is motivated by the need for smooth higher-order derivatives in the continuum approximation, which is crucial for maintaining accuracy in the coupling scheme. 

\subsection{Modeling error}
\label{sec:sub:Modeling error of Numerical}
There, we exclude coarse-graining and directly compare the modeling errors between B-QHOCE, B-QHOCF, and the atomistic model, where all atoms are treated as degrees of freedom that need to be computed

We plot the strain $\nabla u^{\rm ac,\epsilon}$ of the solutions in Figure \ref{fig:strain of B-QCE,B-QHOCE} where ${\rm ac} \in \{ \a, \bqce, \bqhoce \}$. We set $f_{\rm scale}=20$ and $N = 600$ so that $\epsilon=1/600$ and we use dashed lines to mark the interfaces of different regions. It is observed that the three methods are barely distinguishable in the atomistic and the continuum regions while jumps occur at the interfaces due to the inconsistency analyzed in Section \ref{sec: consistency}. The curve of the B-QHOCE method is speculated to be closer to that of the atomistic model which implies a slight advantage of the method over the B-QCE.
\begin{figure}[H]
	\centering
	\includegraphics[width=0.6\textwidth]{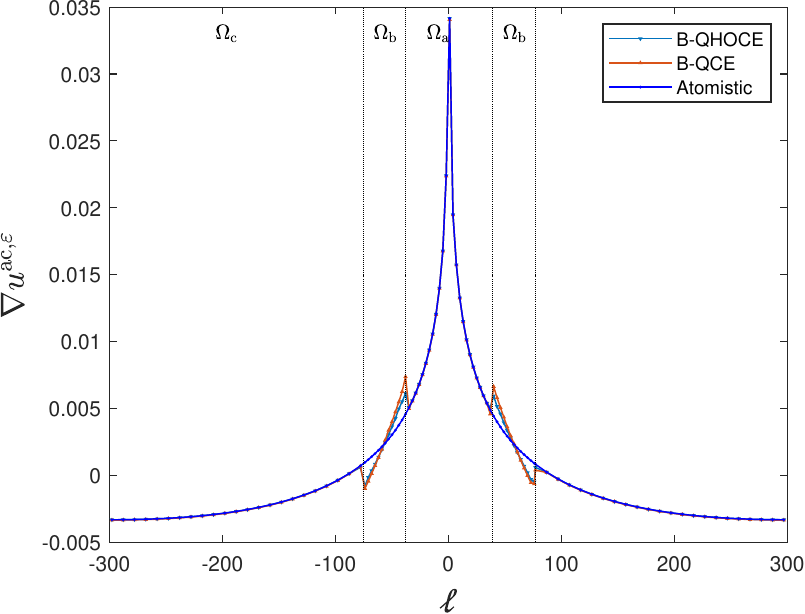}
	\caption{Plot of the strain.}
	\label{fig:strain of B-QCE,B-QHOCE}
\end{figure}

Figure \ref{fig:strain node error} shows the nodal errors in strain of the B-QCE and the B-QHOCE methods. The B-QHOCE method clearly reveals better accuracy than the B-QCE method in the atomistic and the continuum regions. However, the errors of the two methods are comparable in the blending region and are apparently dominant in the total errors.

\begin{figure}[H]
	\centering
	\includegraphics[width=0.6\textwidth]{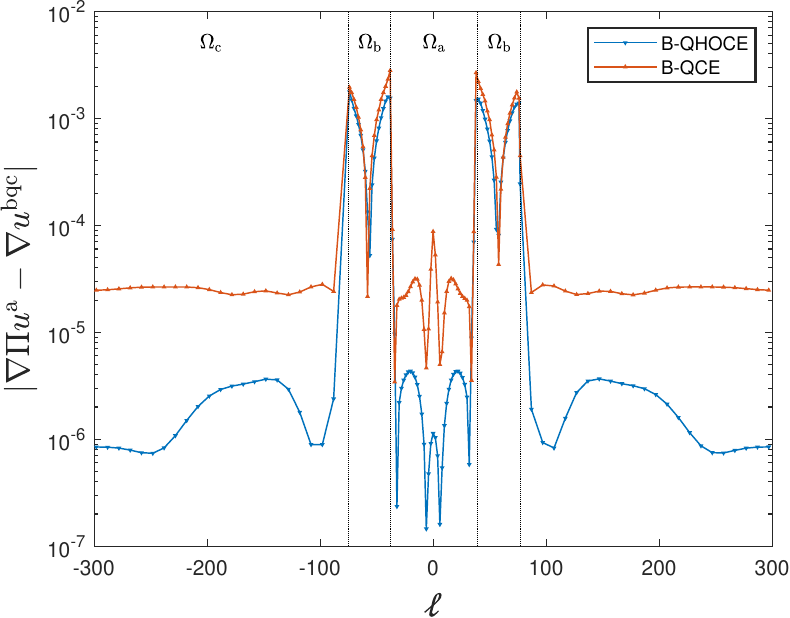}
	\caption{Nodal error in strain.}
	\label{fig:strain node error}
\end{figure}

To emphasize the singular nature of \( f \), we set \( f_{\rm scale} = 20000 \). The convergence rates of the six methods considered in this study are plotted in Figure~\ref{fig:Modeling error}. From the results, we observe that neither the higher-order Cauchy-Born model nor the classical Cauchy-Born model exhibit convergence under singular external forces, with the higher-order model displaying even lower accuracy. This behavior arises because the energy approximation~\eqref{hoc energy} is only valid for smooth displacement fields, underscoring the necessity of integrating the higher-order model within the a/c coupling framework. Furthermore, we note that the B-QHOCE method does not improve the overall rate of convergence, which aligns with our theoretical analysis. In contrast, the B-QHOCF method outperforms all other methods in terms of accuracy, demonstrating its potential advantages. This motivates a more detailed investigation of B-QHOCF, as discussed earlier.
\begin{figure}[H]
	\centering
	\includegraphics[width=0.6\textwidth]{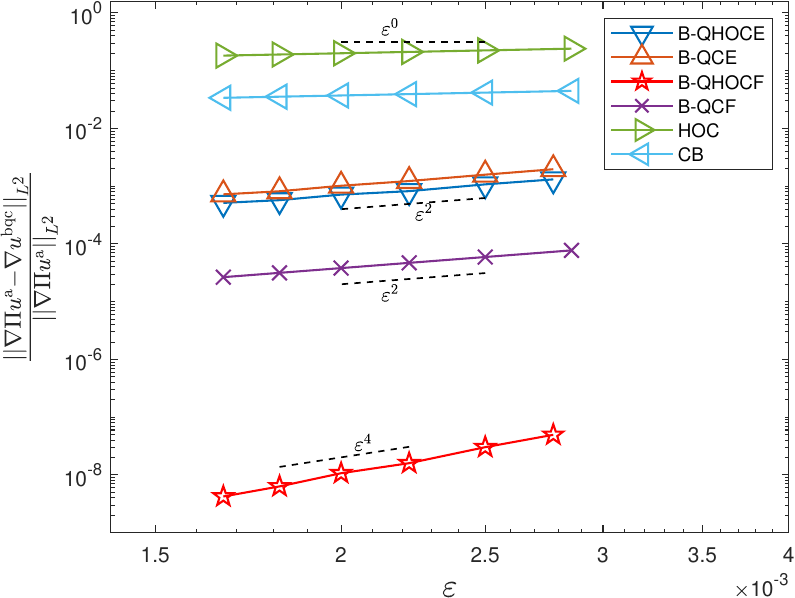}
	\caption{Convergence of relative error with respect to lattice spacing $\varepsilon$.}
	\label{fig:Modeling error}
\end{figure}
\subsection{Coarsening error}
\label{sec:sub:Coarsening error of Numerical}

Although the B-QHOCE method does not significantly improve the overall convergence order with respect to modeling error, it can substantially enhance the accuracy of coarse-graining and effectively reduce the degrees of freedom in the continuum region due to the application of the higher-order Cauchy-Born model and the high-order finite element method. In Theorem \ref{the: A priori error of coarse grain}, we provided the error estimate for the B-QHOCE method, which includes the coarse graining. Next, we validate it through numerical experiments. 

In the experiment, we set N = 100000, with 200 atoms in both the atomistic region and the blending region. A uniform grid is used, and the range of h is from 10,000 to 2,000. Under these conditions, the number of atoms in the continuum region is only between 10 and 50. We compute the $L_2$ error in the continuum region $\Omega_{\rm C}$ between the strain of the atomistic model $\nabla u_{\rm a}$ and the strain of the coarse-grained  B-QHOCE method $\nabla u_h^{\rm bqhoce}$. as shown in Figure ~\ref{fig:corase graining bqhoce}. The results show that the error in the continuum region exhibits fifth-order convergence with respect to $h$, consistent with the theoretical analysis. Thus, it can be concluded that by adopting the higher-order Cauchy-Born model, the number of atoms in the continuum region can be significantly reduced while maintaining the same level of accuracy.
\begin{figure}[H]
	\centering
	\includegraphics[width=0.7\textwidth]{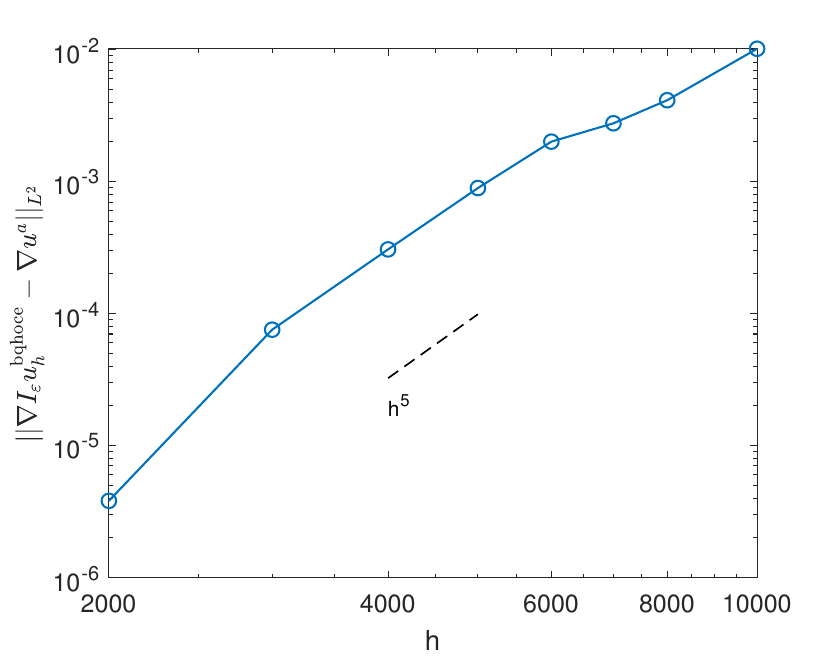}
	\caption{coarsening error}
	\label{fig:corase graining bqhoce}
\end{figure}


\section{Conclusion}
\label{sec:conclusion}

In this work, we introduced higher-order continuum models into blended atomistic-to-continuum (a/c) coupling methods to explore their potential for accuracy improvement. Specifically, we replaced the traditional second-order Cauchy-Born approximation in the continuum region with a higher-order counterpart. Through rigorous error analysis, we demonstrated that while the higher-order model enhances accuracy within the continuum region, it does not improve the overall convergence rate due to new coupling errors arising at the a/c interface. These errors pose a fundamental limitation on achieving higher accuracy and persist in both one- and higher-dimensional settings.

To validate our theoretical findings, we conducted numerical experiments that confirmed the error estimates and demonstrated that incorporating the higher-order continuum model within a blended force-based framework leads to improved accuracy. This study establishes a foundational formulation for atomistic-to-higher-order continuum coupling, assessing its feasibility for enhancing classical a/c methods. The findings provide valuable insights into the role of higher-order continuum approximations in multiscale modeling and highlight potential refinements in coupling strategies.

This work still raise a few open problems which deserve further mathematical analysis.

\begin{itemize}
	
	\item \textit{Stability analysis.} This work does not rigorously establish the stability of the B-QHOCE method, and force-based methods also present challenges. Future work will adapt stability analysis techniques from~\cite{2013_ML_CO_AC_Coupling_ACTANUM,2016_XL_CO_AS_BK_BQC_Anal_2D_NUMMATH,2012_XL_ML_CO_BQCF_Stab_1D_2D_MMS, ortner2023framework, wang2024theoretical} to study both B-QHOCE and B-QHOCF methods, with a particular focus on stability behavior at the interface after incorporating the higher-order continuum model. 
	
	\item \textit{Energy-based atomistic-higher-order continuum coupling with a sharp interface.} The blended approach used in this work introduces coupling errors at the interface that limit overall accuracy. To address this, we plan to explore sharp interface methods~\cite{2012_CO_LZ_GRAC_Construction_SIAMNUM,2006_WE_JL_ZY_GRC_PRB}, reconstructing the interface energy to enable a more precise transition between the atomistic and higher-order continuum models while minimizing residual errors.
	
	\item \textit{Extension to higher dimensions.}  Implementing the higher-order Cauchy-Born model in higher dimensions poses challenges due to the need for third- and higher-order derivatives. We aim to explore spline finite element methods to improve computational efficiency and accuracy, leveraging their ability to represent smooth functions with fewer degrees of freedom.
	
\end{itemize}

Both the theoretical and practical aspects discussed above will be explored in future work.

\bibliographystyle{plain}
\bibliography{MS_Coupling_202502}

\end{document}